\newtheorem{theorem}{Theorem}[section]
\newtheorem{definition}{Definition}[section]
\newtheorem{coro}{Corollary}[section]
\newtheorem{lemma}{Lemma}[section]
\theoremstyle{definition}
\def \l {\left}
\def \r {\right}
\def \bR {\Bbb R}
\def \bB {\Bbb B}
\def\be{\begin{equation}}
\def\ee{\end{equation}}
\def\ba{\begin{array}}
	\def\ea{\end{array}}
\def\bd{\begin{definition}}
	\def\ed{\end{definition}}
\def\bt{\begin{theorem}}
	\def\et{\end{theorem}}
\def\bc{\begin{corollary}}
	\def\ec{\end{corollary}}
\def\bl{\begin{lemma}}
	\def\el{\end{lemma}}
\def\bdm{\begin{displaymath}}
\def\edm{\end{displaymath}}
\begin{document}

%-------------------------------------------------------------------------
% editorial commands: to be inserted by the editorial office
%
%\firstpage{1} \volume{228} \Copyrightyear{2004} \DOI{003-0001}
%
%
%\seriesextra{Just an add-on}
%\seriesextraline{This is the Concrete Title of this Book\br H.E. R and S.T.C. W, Eds.}
%
% for journals:
%
%\firstpage{1}
%\issuenumber{1}
%\Volumeandyear{1 (2004)}
%\Copyrightyear{2004}
%\DOI{003-xxxx-y}
%\Signet
%\commby{inhouse}
%\submitted{March 14, 2003}
%\received{March 16, 2000}
%\revised{June 1, 2000}
%\accepted{July 22, 2000}
%
%
%
%---------------------------------------------------------------------------
%Insert here the title, affiliations and abstract:
%

\title[sublinear operators on generalized mixed Morrey spaces]
 {Boundedness criterion for sublinear operators and commutators on generalized mixed Morrey spaces}

%----------Author 1
\author[M. Wei]{Mingquan Wei$^*$\footnote {$^*$ Corresponding author}}

\address{%
School of Mathematics and Stastics, Xinyang Normal University\\
Xinyang 464000, China}

\email{weimingquan11@mails.ucas.ac.cn}

%\thanks{This work was completed with the support of our
%\TeX-pert.}
%----------Author 2

%----------classification, keywords, date
\subjclass{Primary 42B20; Secondary 42B25, 42B35}

\keywords{Generalized mixed Morrey spaces, sublinear operator, commutators, ${\rm BMO}(\bR^n)$, Hardy-Littlewood maximal operator, fractional integral operator}

\date{September 26, 2021}
%----------additions
%\dedicatory{To my boss}
%%% ----------------------------------------------------------------------

\begin{abstract}
In this paper, the author studies the boundedness for a large class of sublinear operators $T_\alpha, \alpha\in[0,n)$
generated by Calder{\'o}n-Zygmund operators  ($\alpha=0$) and generated by fractional integral operator ($\alpha>0$) on generalized mixed Morrey spaces $M^\varphi_{\vec{q}}(\bR^n)$. Moreover, the boundeness for commutators of  $T_\alpha, \alpha\in[0,n)$ on generalized mixed Morrey spaces $M^\varphi_{\vec{q}}(\bR^n)$ is also studied. As applications, we obtain the boundedness for Hardy-Littlewood maximal operator, Calder{\'o}n-Zygmund singular integral operators, fractional integral operator, fractional maximal operator and their commutators on generalzied mixed Morrey spaces.
\end{abstract}

%%% ----------------------------------------------------------------------
\maketitle
%%% ----------------------------------------------------------------------
%\tableofcontents
\section{Introduction}

For $x\in\bR^n$, and $r>0$, let $B(x,r)$ be the open ball centered at $x$ with the radius $r$, and $B^ c(x,r)$ be its complement. The well-known fractional maximal operator $M_\alpha$ and fractional integral operator $I_\alpha$ are defined by 
$$M_\alpha f(x)=\sup_{ r>0}\frac{1}{|B(x,r)|^{1-\alpha/n}}\int_{B(x,r)}|f(y)|dy,~~0\leq\alpha< n,$$
and
$$I_\alpha f(x)=\int_{\bR^n}\frac{f(y)}{|x-y|^{n-\alpha}}dy,~~0<\alpha<n$$
respectively, where $f$ is locally integrable and $|B(x,r)|$ is the Lebesgue measure of $B(x,r)$. If $\alpha=0$, then $M\equiv M_0$ is the classical Hardy-Littlewood maximal operator.

Other than fractional maximal operator $M_\alpha$ and fractional integral operator $I_\alpha$,  Calder{\'o}n-Zygmund singular integral operators (see [\cite{grafakos2008classical}]) are also basic integral operators in harmonic analyis.
A Calder{\'o}n-Zygmund singular integral operator is a linear operator bounded from $L^2(\bR^n)$
to $L^2(\bR^n)$, which takes all infinitely continuously differentiable functions $f$ with
compact support to the functions $f\in L_{{\rm loc}}(\bR^n)$ and can be represented by
$$Kf(x)=\int_{\bR^n}k(x,y)f(y)dy, ~~ \mathrm{a.e.~off~supp}f.$$
Here $k(x,y)$ is a continuous function away from the diagonal which satisfies
the standard estimates: there exist some $0<\epsilon\leq1$, such that 
$$|k(x,y)|\lesssim \frac{1}{|x-y|^n}$$
for all $x,y\in \bR^n, x\neq y$ and
$$|k(x,y)-k(x',y)|+|k(y,x)-k(y,x')|\lesssim\frac{|x-x'|^\epsilon}{|x-y|^{n+\epsilon}}$$
whenever $2|x-x'|<|x-y|$.

In order to study the above operators and some related operators in harmonic analysis uniformly, many researchers introduced the following sublinear operators satisfying some size conditions.

The first one is $T\equiv T_0$, which is a sublinear opertor, and satisfies that for any $f\in L^1(\bR^n)$ with compact support and $x\notin \mathrm{supp}f$,
\begin{equation}\label{T-1}
|Tf(x)|\lesssim \int\frac{|f(y)|}{|x-y|^n}dy.
\end{equation}
Another one is $T_\alpha$ ($0<\alpha<n$), the fractional version of $T_0$, which is also a sublinear opertor, and satisfies that for any $f\in L^1(\bR^n)$ with compact support and $x\notin \mathrm{supp}f$,
\begin{equation}\label{T-2}
|T_\alpha f(x)|\lesssim \int\frac{|f(y)|}{|x-y|^{n-\alpha}}dy.
\end{equation}
We point out that condition (\ref{T-1}) was first introduced by Soria and
Weiss [\cite{soria1994remark}] and condition (\ref{T-2}) was introduced by Guliyev et al. [\cite{guliyev2011boundedness}]. Conditions (\ref{T-1}) and (\ref{T-2}) are satisfied by many interesting operators in harmonic analysis, such as the Calder{\'o}n-Zygmund singular integral operators, the Carleson's maximal operators, the Hardy-Littlewood maximal operators, the Fefferman's singular multipliers, the Fefferman's singular integrals, the Ricci-Stein's oscillatory singular integrals, the Bochner-Riesz means and so on (see
[\cite{lu2002singular,soria1994remark}] for more details).

As is well known, commutators are also important operators and play a key role in harmonic analysis. Recall that for a locally integrable function $b$ and an integral operator $T$, the commutator formed by $b$ and $T$ is defined by $[b,T]=bT-Tb$. Commutators of fractional maximal operator, fractional integral operator and Calder{\'o}n-Zygmund singular integral operators have been intensively studied, see [\cite{grafakos2008classical}] for more details. It is worthy pointing out that there are two different commutators of the fractional maximal operator $M_\alpha$. In this paper, the commutator of fractional maximal operator $[b,M_\alpha]$ under consideration is of the form $$[b,M_\alpha] f(x)=\sup_{ r>0}\frac{1}{|B(x,r)|^{1-\alpha/n}}\int_{B(x,r)}|b(x)-b(y)|~|f(y)|dy,~~0<\alpha\leq n,$$
for all locally integrable functions $f$ on $\bR^n$.

To study a class of commutators uniformly, one can also introduce some sublinear operators with additional size conditions as before. 
For a function $b$, suppose that the operator $T_b\equiv T_{b,0}$
represents a linear or a sublinear operator, which satisfies that for any $f\in L^1(\bR^n)$ with compact support and $x\notin \mathrm{supp}f$,
\begin{equation}\label{T-3}
|T_bf(x)|\lesssim \int\frac{|b(x)-b(y)|}{|x-y|^{n}}|f(y)|dy.
\end{equation}
Similarly, we assume that the operator $T_{b,\alpha}, \alpha\in(0,n)$ represents a linear or a sublinear operator, which
satisfies that for any $f\in L^1(\bR^n)$ with compact support and $x\notin \mathrm{supp}f$,
\begin{equation}\label{T-4}
|T_{b,\alpha}f(x)|\lesssim \int\frac{|b(x)-b(y)|}{|x-y|^{n-\alpha}}|f(y)|dy.
\end{equation}
The operator $T_{b,\alpha}, \alpha\in[0,n)$ has been studied in [\cite{guliyev2011boundedness,lu2002singular}].

As we know, the classical Morrey space is an important generalization of Lebesgue spaces. The classical Morrey space was introduced by Morrey in [\cite{morrey1938solutions}] to study the regularity of elliptic partial differential equations.
Nowadays, the Morrey space has become one of the most important function spaces in the theory of function spaces.
By a slight modification, Guliyev et al. [\cite{burenkov2009necessary,guliyev2011boundedness}] introduced  generalized Morrey spaces and studied the boundedness of many important operators in harmonic analysis on  generalized Morrey spaces. In particular, the boundedness of sublinear operators $T$, $T_{\alpha}$, $T_b$ and $T_{b,\alpha}$ was considered in [\cite{guliyev2011boundedness}]. Another interesting extension of the classical Morrey space is the mixed Morrey space, which was introduced by Nogayama et al. [\cite{Nogayama2019Boundedness,nogayama2019mixed,nogayama2020atomic}]. The boundedness of many integral operators and their commutators on mixed Morrey spaces was studied in [\cite{Nogayama2019Boundedness,nogayama2019mixed}].

Highly inspired by the work of Guliyev [\cite{burenkov2009necessary,guliyev2011boundedness,guliyev2013global}] and Nogayama et al. [\cite{Nogayama2019Boundedness,nogayama2019mixed,nogayama2020atomic}], we are going to study the boundedness of  the operators  $T$, $T_{\alpha}$, $T_b$ and $T_{b,\alpha}$ under some size conditions on  generalized mixed Morrey spaces in this paper. Here, generalized mixed Morrey spaces are the combination of generalized Morrey spaces and mixed Morrey spaces (see Definition \ref{GMM} in the following section), and theorefore are much more general. 
Our main results extend the boundedness of many operators on Morrey spaces [\cite{adams1975note,rosenthal2016boundedness,softova2013parabolic,yang2019existence}],  mixed Lebesgue spaces [\cite{1961The,chen2020iterated}],  generalized Morrey spaces [\cite{burenkov2009necessary,guliyev2011boundedness,guliyev2013global}], and mixed Morrey spaces [\cite{Nogayama2019Boundedness,nogayama2019mixed}]. Moreover, one can obtain the boundedness of many integral operators in harmonic analysis on generalized mixed Morrey spaces from our main theorems.

This paper is organized as follows. The definitions and some preliminaries are presented in Sect. 2. The boundedness of $T$ and $T_{\alpha}$ on generalized mixed Morrey spaces is studied in Sect. 3. The boundedness of $T_b$ and $T_{b,\alpha}$ on generalized mixed Morrey spaces is obtained in Sect. 4. Some applications are given in Sect. 5 to show the power of our main theorems.

\section{Definitions and preliminaries}

Throughout the paper, we use the following notations.

For any $r>0$ and $x\in \bR^n$, let $B(x,r)=\{y: |y-x|<r\}$ be the ball centered at $x$ with radius $r$.  Let $\bB=\{B(x,r):x\in\bR^n, r>0\}$ be the set of all such balls. We also use $\chi_E$ and $|E|$ to denote the characteristic function and the Lebesgue measure of a measurable set $E$.

The letter $\vec{p}$ denotes $n$-tuples of the numbers in $(0,\infty]$,~($n\geq1$),~$\vec{p}=(p_1,\cdots,p_n)$. By definition, the inequality, for example, $0<\vec{p}<\infty$ means $0<p_i<\infty$ for all $i$. For $1\leq\vec{p}\leq\infty$, we denote $\vec{p}'=(p'_1,\cdots,p'_n)$, where $p'_i$ satisfies
$\frac{1}{p_i}+\frac{1}{p'_i}=1$. By $A\lesssim B$, we
mean that $A\leq CB$ for some constant $C>0$, and $A\sim B$ means that $A\lesssim B$ and $B\lesssim A$.

Let $\mathcal{M}(\bR^n)$ be the class of Lebesgue measurable functions on $\bR^n$. 
For $0<\vec{p}<\infty$, a measurable function $f$ on $\bR^n$ belongs to $L^{\vec{p}}_{{\rm loc}}(\bR^n)$ if $f\chi_{E}\in L^{\vec{p}}(\bR^n)$ for any compact subset $E$ of $\bR^n$.
We also use $\mathbb{C}$ to represent all the complex numbers, and  $\mathbb{N}$ to represent the collection of all non-negative integers.

The classical Morrey space $M^p_q(\bR^n)$ is a natural generalization of Lebesgue spaces, which consist of all 
functions $f\in L^p_{{\rm loc}}(\bR^n)$ with finite norm
$$\|f\|_{M^p_q}=\sup_{x\in\bR^n,r>0}|B(x,r)|^{\frac{1}{p}-{\frac{1}{q}}}\|f\|_{L^q(B(x,r))},$$
where $1\leq q\leq p\leq\infty$.
Note that $M^p_q(\bR^n)=L^p(\bR^n)$ when $p=q$, and $M^p_q(\bR^n)=L^{\infty}(\bR^n)$ when $p=\infty$. If $q>p$, then $M^p_q(\bR^n)=\Theta$, where $\Theta$ is the set of all functions equivalent to 0 on $\bR^n$.

The classical Morrey space is a proper substitution when we consider the boundedness of integral operators in harmonic analysis. For example,
Chiarenza and Frasca [\cite{chiarenza1987morrey}] studied the boundedness of the maximal operator $M$ in $M^p_q(\bR^n)$. The well known Hardy-Littlewood-Sobolev inequality was also extented to Morrey spaces by  Spanne (but published by Peetre [\cite{peetre1969theory}]) and Adams [\cite{adams1975note}]. Since $M_\alpha f(x)\lesssim I_\alpha |f|(x), 0<\alpha<n$, the fractional maximal operator $M_\alpha$ is also bounded on $M^p_q(\bR^n)$.

In recent years, classical Morrey spaces $M^p_q(\bR^n)$ were extended to generalized Morrey spaces by Guliyev et al. [\cite{burenkov2009necessary,guliyev2021regularity,guliyev2011boundedness,guliyev2013global}].  

Let ~$1\leq q<\infty$ and $\varphi(x,r): \bR^n\times (0,\infty)\rightarrow(0,\infty)$ be a Lebesgue measurable function. A function $f\in\mathcal{M}(\bR^n)$ belongs to $M^\varphi_{q}(\bR^n)$, the generalized Morrey spaces, if it satisfies
\begin{eqnarray}
\|f\|_{M^\varphi_{q}}=\sup_{x\in\bR^n, r>0}\varphi(x,r)^{-1}|B(x,r)|^{-\frac{1}{q}}\|f\|_{L^q(B(x,r))}<\infty.
\end{eqnarray}
Roughly speaking, generalized Morrey space $M^\varphi_{q}(\bR^n)$ can be obtained by substituting $|B(x,r)|^{1/p-1/q}$ with a more general function in the definition of $M^p_q(\bR^n)$. From the definition, we recover the classical Morrey space $M^p_q(\bR^n)$ by taking $\varphi(x,r)=|B(x,r)|^{-1/p}$.
The boundedness of the sublinear operators $T$, $T_{\alpha}$, $T_b$ and $T_{b,\alpha}$ on spaces $M^\varphi_{q}(\bR^n)$ under some size conditions was obtained by Guliyev et al. [\cite{guliyev2011boundedness}].

In 2019, Nogayama [\cite{nogayama2019mixed}] considered a new Morrey type space, with the $L^p(\bR^n)$ norm replaced by the mixed Lebesgue norm $L^{\vec{q}}(\bR^n)$, which is call the mixed Morrey space.

We first recall the definition of mixed Lebesgue spaces defined in [\cite{1961The}].

Let ~$\vec{p}=(p_1,\cdots,p_n)\in(0,\infty]^n$. Then the mixed Lebesgue norm $\|\cdot\|_{L^{\vec{p}}}$ is defined by
\begin{eqnarray*}
	\|f\|_{L^{\vec{p}}}
	= \l(\int_{\bR}\cdots \l(\int_{\bR}\l(\int_{\bR}|f(x_1,x_2,\cdots,x_n)|^{p_1}dx_1\r)^{\frac{p_2}{p_1}}dx_2\r)^{\frac{p_3}{p_2}}\cdots dx_n\r)^{\frac{1}{p_n}}
\end{eqnarray*}
where $f: \bR^n \rightarrow \mathbb{C}$ is a measurable function. If $p_j=\infty$ for some $j=1,\cdots,n$, then we have to make appropriate modifications. We define the mixed Lebesgue space $L^{{\vec{p}}}(\bR^n)$
to be the set of all $f\in \mathcal{M}(\bR^n)$  with $\|f\|_{L^{\vec{p}}}<\infty$.

Let $1\leq \vec{q}<\infty, 1\leq p<\infty$ and $n/p\leq \sum_{i=1}^n1/{q_i}$. A function $f\in\mathcal{M}(\bR^n)$ belongs to the mixed Morrey spaces $M^p_{\vec{q}}(\bR^n)$ if 
$$\|f\|_{M^p_{\vec{q}}}=\sup_{x\in\bR^n,r>0}|B(x,r)|^{\frac{1}{p}-{\frac{1}{n}\l(\sum_{i=1}^n\frac{1}{q_i}\r)}}\|f\chi_{B(x,r)}\|_{L^{\vec{q}}}<\infty.$$
Obviously, we recover the classical Morrey space $M^p_{q}(\bR^n)$ when $\vec{q}=q$. We point out that in [\cite{Nogayama2019Boundedness,nogayama2019mixed}], the author used the cubes to define the mixed Morrey spaces. It is not hard to verify that the two definitions are equivalent.

In [\cite{nogayama2019mixed}], the Hardy-Littlewood maximal operator $M$, the fractional interal operator $I_\alpha$ and the singular integral operators $K$ were proved to be bounded in $M^p_{\vec{q}}(\bR^n)$. The boundedness of the commutator of $I_\alpha$ on $M^p_{\vec{q}}(\bR^n)$ was also obtained in [\cite{Nogayama2019Boundedness}].

Noting that generalized Morrey spaces and mixed Morrey spaces are different extensions of the classical Morrey spaces, it is natural for us to unify the two spaces. 

Now we are in a position to give the difinition of generalized mixed Morrey spaces.
\begin{definition}\label{GMM}
	Let $1\leq \vec{q}<\infty$, and $\varphi(x,r): \bR^n\times (0,\infty)\rightarrow(0,\infty)$ be a Lebesgue measurable function. A function $f\in\mathcal{M}(\bR^n)$ belongs to the mixed Morrey space $M^\varphi_{\vec{q}}(\bR^n)$ if 
	$$\|f\|_{M^\varphi_{\vec{q}}}=\sup_{x\in\bR^n,r>0}\varphi(x,r)^{-1}\|\chi_{B(x,r)}\|_{L^{\vec{q}}}^{-1}\|f\chi_{B(x,r)}\|_{L^{\vec{q}}}<\infty.$$	
\end{definition}	
Generalized mixed Morrey spaces contain generalized Morrey spaces and mixed Morrey spaces as special cases. In fact, $M^\varphi_{\vec{q}}(\bR^n)=M^\varphi_{q}(\bR^n)$ when $\vec{q}=q$, and $M^\varphi_{\vec{q}}(\bR^n)=M^p_{\vec{q}}(\bR^n)$ when $\varphi(x,r)=|B(x,r)|^{-1/p}$.

For a non-negative locally integrable function $w$ on $\bR^n$,  the weighted Hardy operators $H_w$ and $H^*_w$ are defined by defined 
$$H_wg(t):=\int_t^\infty g(s)w(s)ds,~~~0<t<\infty$$
and
$$H^*_wg(t):=\int_t^\infty \l(1+\ln\frac{s}{t}\r)g(s)w(s)ds,~~~0<t<\infty,$$
respectively, where $g\in L^1_{{\rm loc}}(\bR^n)$.

We have the following boundedness results for the weighted Hardy operators $H_w$ and $H^*_w$ proved in [\cite{guliev2012generalized,2013GENERALIZED}].
\begin{lemma}\label{L-Hardy}
	The inequality
	$$\mathrm{ess}\sup_{t>0}v_2(t)H_wg(t)\leq C~\mathrm{ess}\sup_{t>0}v_1(t)g(t)$$ 
	holds for all non-negative and non-increasing $g$ on $(0,\infty)$ if and only if
	$$A=\sup_{t>0}v_2(t)\int_t^\infty\frac{w(s)ds}{\mathrm{ess}\sup_{s<\tau<\infty}v_1(\tau)}<\infty,$$
	and $C\sim A$.
\end{lemma}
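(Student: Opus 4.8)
The plan is to prove both implications by reducing the whole admissible family of functions $g$ to a single extremal profile attached to the weight $v_1$. Throughout, write $L:=\mathrm{ess}\sup_{t>0}v_1(t)g(t)$ for the right-hand quantity, and abbreviate $\rho(s):=\mathrm{ess}\sup_{s<\tau<\infty}v_1(\tau)$, so that the constant in the statement reads $A=\sup_{t>0}v_2(t)\int_t^\infty w(s)\rho(s)^{-1}\,ds$. All integrands are non-negative, so Tonelli's theorem and monotone manipulations are available without integrability scruples, and the essential suprema may be exchanged with these integrals freely.

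For the sufficiency ($A<\infty$ forces the inequality with $C\le A$), the key point is that the monotonicity of $g$ upgrades the \emph{single} normalization $v_1(\tau)g(\tau)\le L$, valid for a.e.\ $\tau$, into the \emph{pointwise} majorization $g(s)\,\rho(s)\le L$ for a.e.\ $s$. Indeed, for every $\tau$ lying in the range over which the supremum defining $\rho(s)$ is taken, the monotonicity hypothesis (used precisely in the direction compatible with that range) gives the comparison $g(s)\le g(\tau)$, whence $g(s)v_1(\tau)\le g(\tau)v_1(\tau)\le L$; taking the essential supremum over such $\tau$ yields $g(s)\rho(s)\le L$, i.e. $g(s)\le L\,\rho(s)^{-1}$. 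Substituting this into $H_wg(t)=\int_t^\infty g(s)w(s)\,ds$, pulling $L$ out of the integral, multiplying by $v_2(t)$ and taking $\mathrm{ess}\sup_{t>0}$ gives $\mathrm{ess}\sup_{t>0}v_2(t)H_wg(t)\le L\cdot\sup_{t>0}v_2(t)\int_t^\infty w(s)\rho(s)^{-1}\,ds=A\,L$, which is the desired inequality with $C\le A$.

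For the necessity ($A\le C$), I would simply test the assumed inequality against the extremal function $g_0(s):=\rho(s)^{-1}$. First one checks that $g_0$ is admissible: it is non-negative, it is monotone of the required type because $\rho$ is monotone in $s$ (the supremum is taken over a shrinking range as $s$ grows), and it is correctly normalized, $\mathrm{ess}\sup_{t>0}v_1(t)g_0(t)\le 1$, since $v_1(t)\le\rho(t)$ holds for a.e.\ $t$ as $t$ itself belongs to the range defining $\rho(t)$. Feeding $g_0$ into the hypothesized inequality and reading off the left-hand side produces $\sup_{t>0}v_2(t)\int_t^\infty w(s)\rho(s)^{-1}\,ds\le C$, that is $A\le C$. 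Together with the previous paragraph this establishes $C\sim A$, with the sharp constant realized by $g_0$.

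The step I expect to be the main obstacle is making the passage between the global constraint $\mathrm{ess}\sup v_1g\le L$ and the pointwise bound $g\le L\,\rho^{-1}$ completely rigorous: one has to control the exceptional null sets, allow for $v_1$ or $\rho$ to degenerate (vanish or blow up) on sets of positive measure, and above all verify that the monotonicity of $g$ is invoked in the direction that matches the range $\{\tau:\,s<\tau<\infty\}$ appearing in $\rho$. This compatibility between the monotonicity of $g$ and the sup-range defining $\rho$ is exactly what pins down the particular form of the weight in $A$, and it is the only genuinely delicate point; everything else is routine manipulation of non-negative integrands. As an alternative one could invoke the known duality for the ``down'' Hardy operator, but the direct extremal-function argument above is shorter and self-contained.
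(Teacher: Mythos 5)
The paper itself offers no proof of Lemma \ref{L-Hardy}: it is quoted as a known result from [\cite{guliev2012generalized,2013GENERALIZED}], so your proposal has to stand entirely on its own. Its core idea --- upgrade the single constraint $\mathrm{ess}\sup_{t>0}v_1(t)g(t)\le L$ to the pointwise bound $g(s)\rho(s)\le L$ via monotonicity, then get the reverse inequality by testing with the extremal function $g_0=\rho^{-1}$ --- is exactly the standard argument for this type of result. But as written it does not prove the statement in question, because your pivotal step runs the monotonicity backwards relative to the stated hypothesis: for \emph{non-increasing} $g$ (which is what the lemma says) and $\tau\in(s,\infty)$ one has $g(\tau)\le g(s)$, not $g(s)\le g(\tau)$, so the chain $g(s)v_1(\tau)\le g(\tau)v_1(\tau)\le L$ collapses. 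The same mismatch infects the necessity half: $\rho$ is non-increasing, so your test function $g_0=\rho^{-1}$ is non-\emph{de}creasing, hence not admissible in the stated class. This is not a repairable slip, because the lemma as printed is actually false: take $v_1(t)=t$, $v_2\equiv1$, $w(s)=e^{-s}$ and the non-increasing function $g(s)=1/s$; then $\mathrm{ess}\sup_{t>0}v_1(t)g(t)=1$ and $\rho\equiv\infty$, so $A=0<\infty$, yet $H_wg(t)=\int_t^\infty s^{-1}e^{-s}\,ds\to\infty$ as $t\to0^+$, so the left-hand side is $+\infty$ and no finite $C$ can work.

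What is true --- what Guliyev's cited papers assert, and what this paper actually needs, since in the proofs of Theorems \ref{T-T1}--\ref{T-T4} the lemma is applied to $g(r)=\|f\|_{L^{\vec{q}}(B(x,r))}$, which is non-\emph{de}creasing in $r$ --- is the identical statement for non-decreasing $g$; the word ``non-increasing'' in the paper is a typo. Read with that correction, your argument is the right one, and your instinct that the ``compatibility between the monotonicity of $g$ and the sup-range defining $\rho$'' is the crux was sound; but you resolved the tension by silently invoking whichever direction your proof needs rather than the stated hypothesis, and a correct write-up must identify and fix the hypothesis explicitly. Two further repairs are needed even for the corrected statement. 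First, your justification of $v_1(t)\le\rho(t)$ a.e.\ (``$t$ itself belongs to the range defining $\rho(t)$'') is wrong: that range is the open interval $(t,\infty)$, which excludes $t$. The inequality does hold almost everywhere, but it requires an argument, e.g.\ for each rational $q$ the set $E_q=\{t:\,v_1(t)>q>\rho(t)\}$ is null, since every point of $E_q$ essentially bounds $E_q$ from above. Second, in the degenerate case where $\rho$ vanishes on a ray, $g_0$ is not finite and one must instead test with characteristic functions of rays; you flagged the degenerate cases but did not carry out this analysis.
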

\begin{lemma}\label{L-Hardy-1}
	The inequality
	$$\mathrm{ess}\sup_{t>0}v_2(t)H^*_wg(t)\leq C~\mathrm{ess}\sup_{t>0}v_1(t)g(t)$$ 
	holds for all non-negative and non-increasing $g$ on $(0,\infty)$ if and only if
	$$A=\sup_{t>0}v_2(t)\int_t^\infty\l(1+\ln\frac{s}{t}\r)\frac{w(s)ds}{\mathrm{ess}\sup_{s<\tau<\infty}v_1(\tau)}<\infty,$$
	and $C\sim A$.
\end{lemma}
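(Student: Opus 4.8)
The plan is to prove the two implications separately and track the constants so as to conclude $C\sim A$. The guiding principle is that $H^*_w$ is nothing but $H_w$ with the kernel $w(s)$ replaced by the enlarged, $t$-dependent kernel $\l(1+\ln\frac{s}{t}\r)w(s)\ge w(s)$, so the entire analysis behind Lemma \ref{L-Hardy} should carry over verbatim, with $w(s)$ systematically replaced by $\l(1+\ln\frac{s}{t}\r)w(s)$; this is exactly why the characterizing quantity $A$ acquires the logarithmic factor. Throughout I write $B:=\mathrm{ess}\sup_{t>0}v_1(t)g(t)$ for the right-hand side and $V_1(s):=\mathrm{ess}\sup_{s<\tau<\infty}v_1(\tau)$.

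For sufficiency (assume $A<\infty$) I would invoke the same pointwise domination that drives the sufficiency half of Lemma \ref{L-Hardy}: for a non-negative non-increasing $g$ one has $g(s)\le B\,V_1(s)^{-1}$ for a.e. $s$. Substituting this directly into the definition of $H^*_w$ gives, for every $t>0$,
$$H^*_wg(t)=\int_t^\infty\l(1+\ln\frac{s}{t}\r)g(s)w(s)\,ds\le B\int_t^\infty\l(1+\ln\frac{s}{t}\r)\frac{w(s)}{V_1(s)}\,ds.$$
Multiplying by $v_2(t)$ and taking the essential supremum over $t$ yields $\mathrm{ess}\sup_{t>0}v_2(t)H^*_wg(t)\le A\,B$, i.e. the inequality with $C\lesssim A$. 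Here the factor $1+\ln\frac{s}{t}$ simply rides along, so no extra work beyond Lemma \ref{L-Hardy} is needed; a conceptually illuminating alternative is to write $\ln\frac{s}{t}=\int_t^s\frac{du}{u}$ and interchange the order of integration (justified by Tonelli, all integrands being non-negative) to obtain the identity
$$H^*_wg(t)=H_wg(t)+\int_t^\infty\frac{1}{u}H_wg(u)\,du,$$
which exhibits the logarithmic weight as an extra Hardy averaging of $H_wg$ and reduces matters to two applications of Lemma \ref{L-Hardy}.

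For necessity (assume the inequality holds with constant $C$) I would test it against the same admissible family of non-increasing functions used to establish the necessity in Lemma \ref{L-Hardy}, namely suitably normalized, monotone truncations of $V_1^{-1}$, for which the right-hand side stays comparable to $C$. Since the kernel of $H^*_w$ dominates that of $H_w$ and carries the factor $1+\ln\frac{s}{t}$, evaluating $v_2(t)H^*_wg(t)$ on such a test function reproduces, for each fixed $t$, precisely the quantity $v_2(t)\int_t^\infty\l(1+\ln\frac{s}{t}\r)w(s)V_1(s)^{-1}\,ds$ on the left while the right-hand side remains $\lesssim C$; taking the supremum over $t$ gives $A\lesssim C$.

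The routine points are the two appeals to Tonelli and the verification that the chosen test functions are admissible. The main obstacle is the necessity: one must produce test functions that are simultaneously admissible (so the right-hand side stays comparable to $C$, which is where the monotonicity constraint and the envelope $V_1$ must be respected) and nearly extremal for the logarithmically weighted operator. The delicate part is threading the factor $1+\ln\frac{s}{t}$ through the monotone-envelope (Chebyshev-type) step so that it enters the characterizing integral with exactly the right weight; once the test family of Lemma \ref{L-Hardy} is seen to serve this purpose, the extra factor transfers automatically from the kernel into $A$, and the comparison $C\sim A$ follows.
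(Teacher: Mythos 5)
The first thing to note is that the paper contains no proof of Lemma \ref{L-Hardy-1} (nor of Lemma \ref{L-Hardy}): both are imported verbatim from \cite{guliev2012generalized,2013GENERALIZED}. So your proposal must stand on its own, and it does not: both halves founder on the same monotonicity error. Your sufficiency argument hinges entirely on the pointwise bound $g(s)\le B\,V_1(s)^{-1}$, where $V_1(s)=\mathrm{ess}\sup_{s<\tau<\infty}v_1(\tau)$ and $B=\mathrm{ess}\sup_t v_1(t)g(t)$, asserted for non-increasing $g$. For non-increasing $g$ monotonicity runs the wrong way: from $v_1(\tau)g(\tau)\le B$ and $g(\tau)\ge g(s)$ for $\tau<s$ one only gets $g(s)\,\mathrm{ess}\sup_{0<\tau<s}v_1(\tau)\le B$, i.e.\ an envelope over $(0,s)$, not over $(s,\infty)$. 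The bound you invoke is simply false in the stated class: take $v_1(\tau)=\tau$ and $g=\chi_{(0,1)}$; then $B=1$ while $V_1\equiv\infty$, so $B\,V_1^{-1}\equiv 0<g$. Worse, with $v_2\equiv w\equiv 1$ the same choice shows the lemma \emph{as literally stated} is false: $A=0<\infty$, yet $v_2(t)H^*_wg(t)=-\ln t\to\infty$ as $t\to 0^+$, so no finite $C$ exists. What is true --- and what this paper actually needs, since it applies the lemma to $g(r)=\|f\|_{L^{\vec q}(B(x,r))}$, which is non-\emph{de}creasing --- is the statement for non-decreasing $g$: there $g(s)V_1(s)=\mathrm{ess}\sup_{\tau>s}g(s)v_1(\tau)\le\mathrm{ess}\sup_{\tau>s}g(\tau)v_1(\tau)\le B$, and your substitution argument then goes through verbatim. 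Your proof never detects that ``non-increasing'' is a typo, and as written its first display is unjustifiable.

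The necessity half is only gestured at, and the gesture repeats the error: you propose to test with ``monotone truncations of $V_1^{-1}$,'' but $V_1$ is non-increasing, hence $V_1^{-1}$ is non-decreasing and is inadmissible in the class you claim to work in. In the corrected (non-decreasing) setting there is nothing delicate to ``thread'': $g=V_1^{-1}$ is itself an admissible test function, because $v_1(t)\le V_1(t)$ a.e.\ (if $v_1>q>V_1$ on a set $D$ of positive measure, then for each $t\in D$ one has $v_1<q$ a.e.\ on $(t,\infty)$, so $|D\cap(t,\infty)|=0$, contradicting the Lebesgue density theorem at a.e.\ point of $D$); hence $\mathrm{ess}\sup_t v_1(t)V_1(t)^{-1}\le 1$, and plugging $g=V_1^{-1}$ into the assumed inequality returns exactly $A\le C$, the factor $1+\ln(s/t)$ entering automatically because it sits in the kernel of $H^*_w$. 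This admissibility check is precisely the step you dismiss as routine, yet it is the only real content of the necessity direction. Finally, your aside identity $H^*_wg(t)=H_wg(t)+\int_t^\infty u^{-1}H_wg(u)\,du$ is correct, but it does not ``reduce matters to two applications of Lemma \ref{L-Hardy}'': the outer integral applies the Hardy operator to $H_wg$, which is non-increasing no matter what $g$ is, and the valid characterization for non-increasing inputs involves $\mathrm{ess}\sup_{0<\tau<s}v_1(\tau)$, not $V_1(s)$.
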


%Let $M$ be the Hardy-Littlewood maximal operator
%$$(M f)(x)=\sup_{r>0}\frac{1}{B(x,r)}\int_B|f(y)|dy.$$

%Then the mapping property of $M$ on mixed norm space $M^p_{\vec{q}}$ was proved in [\cite{nogayama2019mixed}].
%\begin{lemma}\label{l0-1}
%Let $1<\vec{q}<\infty$ and $1<p<\infty$ satisfy $\frac{n}{p}\leq\sum_{j=1}^n\frac{1}{q_j}$. Then
%\begin{equation}\label{0-c}
%\|Mf\|_{M^p_{\vec{q}}}\leq\|f\|_{M^p_{\vec{q}}}.
%\end{equation}
%\end{lemma}
%We can also deduce that the Hardy-Littlewood maximal operator is bounded on $L^{\vec{q}}$ from Lemma \ref{l0-1}, since the mixed Lebesgue space can be regarded as a special mixed case of the Morrey space.

\section{Sublinear operators $T$ and $T_\alpha$ in spaces $M^\varphi_{\vec{q}}(\bR^n)$}
In this section, we investigate the boundedness of $T$ and $T_\alpha$ satisfying the size conditions (\ref{T-1}) and (\ref{T-2}) respectively, on generalized mixed Morrey spaces $M^\varphi_{\vec{q}}(\bR^n)$.

We first prove two lemmas, which give us the explicit estimats for the $L^{\vec{q}}(\bR^n)$ norm of $T$ and $T_\alpha$ on a given ball $B(x_0,r)$.
\begin{lemma}\label{L-T1}
	Let $1<\vec{q}<\infty$, $T$ be a sublinear operator satisfying condition {\rm (\ref{T-1})}, and bounded on $L^{\vec{q}}(\bR^n)$.
	
	Then for $1<\vec{q}<\infty$, the inequality 
	$$\|Tf\|_{L^{\vec{q}}(B(x_0,r))}\lesssim r^{\sum_{i=1}^n\frac{1}{q_i}}\int_{2r}^\infty t^{-1-\sum_{i=1}^n\frac{1}{q_i}}
	\|f\|_{L^{\vec{q}}(B(x_0,t))}dt$$
	holds for any ball $B(x_0,r)$ and all $f\in L^{\vec{q}}_{{\rm loc}}(\bR^n)$.
\end{lemma}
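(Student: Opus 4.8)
The plan is to decompose $f$ into a local part and a tail part relative to the ball $B(x_0,r)$, exploiting the $L^{\vec{q}}$-boundedness of $T$ for the local piece and the pointwise size condition (\ref{T-1}) for the tail. Write $f = f_1 + f_2$ where $f_1 = f\chi_{B(x_0,2r)}$ and $f_2 = f\chi_{(B(x_0,2r))^c}$. By sublinearity, $|Tf(x)| \le |Tf_1(x)| + |Tf_2(x)|$, so $\|Tf\|_{L^{\vec{q}}(B(x_0,r))} \lesssim \|Tf_1\|_{L^{\vec{q}}(B(x_0,r))} + \|Tf_2\|_{L^{\vec{q}}(B(x_0,r))}$, and I will bound the two terms separately.

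For the \emph{local part}, I would simply invoke the assumed $L^{\vec{q}}(\bR^n)$-boundedness of $T$ to get
$$\|Tf_1\|_{L^{\vec{q}}(B(x_0,r))} \le \|Tf_1\|_{L^{\vec{q}}(\bR^n)} \lesssim \|f_1\|_{L^{\vec{q}}(\bR^n)} = \|f\|_{L^{\vec{q}}(B(x_0,2r))}.$$
To match the desired integral form, I then need the elementary observation that $\|f\|_{L^{\vec{q}}(B(x_0,2r))} \lesssim r^{\sum_i 1/q_i}\int_{2r}^\infty t^{-1-\sum_i 1/q_i}\|f\|_{L^{\vec{q}}(B(x_0,t))}\,dt$; this follows because $\|f\|_{L^{\vec{q}}(B(x_0,t))}$ is nondecreasing in $t$, so it dominates $\|f\|_{L^{\vec{q}}(B(x_0,2r))}$ on the range $t>2r$, while $r^{\sum_i 1/q_i}\int_{2r}^\infty t^{-1-\sum_i 1/q_i}\,dt \sim 1$.

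The \emph{tail part} is where the real work lies. For $x\in B(x_0,r)$ and $y$ in the support of $f_2$ we have $|x_0-y|\ge 2r$, hence $|x-y|\sim|x_0-y|$, so condition (\ref{T-1}) gives the pointwise bound $|Tf_2(x)| \lesssim \int_{(B(x_0,2r))^c}\frac{|f(y)|}{|x_0-y|^n}\,dy$, which is essentially independent of $x\in B(x_0,r)$. I would then rewrite the domain dyadically or, more cleanly, use the identity $|x_0-y|^{-n} \sim \int_{|x_0-y|}^\infty t^{-1-n}\,dt$ and Fubini to produce $\int_{2r}^\infty t^{-n}\big(\int_{B(x_0,t)}|f(y)|\,dy\big)\frac{dt}{t}$. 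The inner integral $\int_{B(x_0,t)}|f(y)|\,dy$ is then estimated by Hölder's inequality in the mixed-norm setting: $\int_{B(x_0,t)}|f| \lesssim \|f\|_{L^{\vec{q}}(B(x_0,t))}\,\|\chi_{B(x_0,t)}\|_{L^{\vec{q}'}}$. The key computation is that the mixed-norm of the characteristic function of a ball satisfies $\|\chi_{B(x_0,t)}\|_{L^{\vec{q}'}} \sim t^{\sum_i 1/q_i'} = t^{n-\sum_i 1/q_i}$; once this is in hand the powers of $t$ combine to give exactly $t^{-1-\sum_i 1/q_i}\|f\|_{L^{\vec{q}}(B(x_0,t))}$ inside the integral. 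Finally, taking the $L^{\vec{q}}(B(x_0,r))$ norm of the (nearly constant in $x$) bound for $Tf_2$ contributes a factor $\|\chi_{B(x_0,r)}\|_{L^{\vec{q}}} \sim r^{\sum_i 1/q_i}$, yielding the claimed estimate.

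\textbf{The main obstacle} I anticipate is the correct handling of the mixed Lebesgue norm of a characteristic function of a ball and the attendant Hölder inequality, since in the mixed setting $\|\chi_{B(x_0,t)}\|_{L^{\vec{q}}}$ is computed coordinate by coordinate and one must verify it scales like $t^{\sum_i 1/q_i}$ up to constants independent of $x_0$ and $t$ (this uses that the ball's cross-sections are comparable to those of a cube of side $\sim t$). Care is also needed that Hölder's inequality holds in the iterated form appropriate to $L^{\vec{q}}$; I would either cite the relevant mixed-norm Hölder inequality from the references on $L^{\vec{q}}(\bR^n)$ or reduce to it coordinatewise. Everything else is a routine assembly of these pieces.
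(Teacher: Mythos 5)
Your proposal is correct and follows essentially the same route as the paper's proof: the same decomposition $f=f\chi_{B(x_0,2r)}+f\chi_{(B(x_0,2r))^c}$, the $L^{\vec{q}}$-boundedness for the local piece, the pointwise bound via $|x-y|\sim|x_0-y|$ plus Fubini and mixed-norm H\"older for the tail, and the same monotonicity argument to absorb $\|f\|_{L^{\vec{q}}(B(x_0,2r))}$ into the integral. The details you flag as needing care (the estimate $\|\chi_{B(x_0,t)}\|_{L^{\vec{q}'}}\sim t^{n-\sum_{i=1}^n 1/q_i}$ and the iterated H\"older inequality) are exactly the ingredients the paper uses, so there is no gap.
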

\begin{proof}
	For any ball $B=B(x_0,r)$, let $2B=B(x_0,2r)$ be the ball centered at $x_0$, with the radius $2r$. we represent $f$ as $f=f_1+f_2$, where
	$$f_1(y)=f\chi_{2B}(y),~~f_2(y)=f\chi_{(2B)^c}(y),~~r>0.$$
	Since $T$ is a sublinear operator, we have
	$$\|Tf\|_{L^{\vec{q}}(B)}\leq\|Tf_1\|_{L^{\vec{q}}(B)}+\|Tf_2\|_{L^{\vec{q}}(B)}.$$
	Noting that $f_1\in L^{\vec{q}}(\bR^n)$ and $T$ is bounded in $L^{\vec{q}}(\bR^n)$, we have
	$$\|Tf_1\|_{L^{\vec{q}}(B)}\leq\|Tf_1\|_{L^{\vec{q}}(\bR^n)}\lesssim\|f_1\|_{L^{\vec{q}}(\bR^n)}=
	\|f\|_{L^{\vec{q}}(2B)}.$$
	It is clear that $x\in B$, $y\in (2B)^c$ imply $\frac{1}{2}|x_0-y|\leq |x-y|\leq \frac{3}{2}|x_0-y|$, which further yields
	$$|Tf_2(x)|\lesssim\int_{(2B)^c}\frac{|f(y)|}{|x_0-y|^n}dy.$$	
	By Fubini's theorem, we have
	\begin{eqnarray*}
		\int_{(2B)^c}\frac{|f(y)|}{|x_0-y|^n}dy&\sim& \int_{(2B)^c}|f(y)|\int_{|x_0-y|}^\infty\frac{dt}{t^{n+1}}dy\\
		&\sim& \int_{2r}^\infty\int_{2r\leq |x_0-y|<t}|f(y)|dy\frac{dt}{t^{n+1}}\\
		&\lesssim& \int_{2r}^\infty\int_{B(x_0,t)}|f(y)|dy\frac{dt}{t^{n+1}}.
	\end{eqnarray*}
	Applying H{\"o}lder's inequality on mixed Lebesgue spaces (see [\cite{1961The}]), we obtain
	$$\int_{(2B)^c}\frac{|f(y)|}{|x_0-y|^n}dy\lesssim\int_{2r}^\infty\|f\|_{L^{\vec{q}}(B(x_0,t))}\frac{dt}{t^{1+\sum_{i=1}^n\frac{1}{q_i}}}.$$
	Moreover, for all $1<\vec{q}<\infty$, we have
	$$\|Tf_2\|_{L^{\vec{q}}(B(x_0,r))}\lesssim
	r^{\sum_{i=1}^n\frac{1}{q_i}}\int_{2r}^\infty\|f\|_{L^{\vec{q}}(B(x_0,t))}\frac{dt}{t^{1+\sum_{i=1}^n\frac{1}{q_i}}}.$$
	Therefore, one gets
	$$\|Tf\|_{L^{\vec{q}}(B(x_0,r))}\lesssim \|f\|_{L^{\vec{q}}(2B)}+
	r^{\sum_{i=1}^n\frac{1}{q_i}}\int_{2r}^\infty\|f\|_{L^{\vec{q}}(B(x_0,t))}\frac{dt}{t^{1+\sum_{i=1}^n\frac{1}{q_i}}}.$$
	On the other hand,
	\begin{eqnarray*}
		\|f\|_{L^{\vec{q}}(2B)}&\sim& r^{\sum_{i=1}^n\frac{1}{q_i}}
		\|f\|_{L^{\vec{q}}(2B)}\int_{2r}^\infty\frac{dt}{t^{1+\sum_{i=1}^n\frac{1}{q_i}}}\\
		&\lesssim& r^{\sum_{i=1}^n\frac{1}{q_i}}\int_{2r}^\infty\|f\|_{L^{\vec{q}}(B(x_0,t))}\frac{dt}{t^{1+\sum_{i=1}^n\frac{1}{q_i}}}.
	\end{eqnarray*}
	Thus
	$$\|Tf\|_{L^{\vec{q}}(B(x_0,r))}\lesssim 
	r^{\sum_{i=1}^n\frac{1}{q_i}}\int_{2r}^\infty\|f\|_{L^{\vec{q}}(B(x_0,t))}\frac{dt}{t^{1+\sum_{i=1}^n\frac{1}{q_i}}}.$$
\end{proof}
The second lemma is about the estimates of $\|T_\alpha f\|_{L^{\vec{p}}(B(x_0,r))}$.
\begin{lemma}\label{L-T2}
	Let $1<\vec{q}<\infty$, $0<\alpha<n/\vec{q}$, $\frac{1}{\vec{p}}=\frac{1}{\vec{q}}-\frac{\alpha}{n}$, $T_\alpha$ be a sublinear operator satisfying condition {\rm (\ref{T-2})}, and bounded from $L^{\vec{q}}(\bR^n)$ to $L^{\vec{p}}(\bR^n)$.
	
	Then for $1<\vec{q}<\infty$, the inequality 
	$$\|T_\alpha f\|_{L^{\vec{p}}(B(x_0,r))}\lesssim r^{\sum_{i=1}^n\frac{1}{p_i}}\int_{2r}^\infty t^{-1-\sum_{i=1}^n\frac{1}{p_i}}
	\|f\|_{L^{\vec{q}}(B(x_0,t))}dt$$
	holds for any ball $B(x_0,r)$ and all $f\in L^{\vec{q}}_{{\rm loc}}(\bR^n)$.
\end{lemma}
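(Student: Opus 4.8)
The plan is to mirror the proof of Lemma~\ref{L-T1} almost verbatim, adjusting only the exponents to reflect that $T_\alpha$ maps $L^{\vec{q}}$ into the different target space $L^{\vec{p}}$. First I would fix a ball $B=B(x_0,r)$, set $2B=B(x_0,2r)$, and split $f=f_1+f_2$ with $f_1=f\chi_{2B}$ and $f_2=f\chi_{(2B)^c}$. Sublinearity of $T_\alpha$ gives
$$\|T_\alpha f\|_{L^{\vec{p}}(B)}\leq\|T_\alpha f_1\|_{L^{\vec{p}}(B)}+\|T_\alpha f_2\|_{L^{\vec{p}}(B)}.$$
For the local piece I would invoke the assumed boundedness of $T_\alpha$ from $L^{\vec{q}}(\bR^n)$ to $L^{\vec{p}}(\bR^n)$ to get
$$\|T_\alpha f_1\|_{L^{\vec{p}}(B)}\leq\|T_\alpha f_1\|_{L^{\vec{p}}(\bR^n)}\lesssim\|f_1\|_{L^{\vec{q}}(\bR^n)}=\|f\|_{L^{\vec{q}}(2B)}.$$

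Next I would treat the tail term $\|T_\alpha f_2\|_{L^{\vec{p}}(B)}$. For $x\in B$ and $y\in(2B)^c$ the equivalence $\tfrac12|x_0-y|\leq|x-y|\leq\tfrac32|x_0-y|$ still holds, so condition (\ref{T-2}) yields the pointwise bound
$$|T_\alpha f_2(x)|\lesssim\int_{(2B)^c}\frac{|f(y)|}{|x_0-y|^{n-\alpha}}dy.$$
Here the only change from Lemma~\ref{L-T1} is the exponent $n-\alpha$ in place of $n$. I would then run the identical Fubini argument, writing $|x_0-y|^{-(n-\alpha)}\sim\int_{|x_0-y|}^\infty t^{\alpha-n-1}dt$, swapping the order of integration, enlarging the region of the inner integral to $B(x_0,t)$, and applying H\"older's inequality on mixed Lebesgue spaces to obtain
$$\int_{(2B)^c}\frac{|f(y)|}{|x_0-y|^{n-\alpha}}dy\lesssim\int_{2r}^\infty\|f\|_{L^{\vec{q}}(B(x_0,t))}\frac{dt}{t^{1+\sum_{i=1}^n\frac{1}{q_i}-\alpha}}.$$
Since $T_\alpha f_2$ is essentially constant in $x$ over $B$ up to the above bound, taking the $L^{\vec{p}}(B)$ norm contributes the factor $\|\chi_B\|_{L^{\vec{p}}}\sim r^{\sum_{i=1}^n\frac{1}{p_i}}$.

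The point at which I must be careful is matching the exponent on $t$ with the target exponent $\sum_{i=1}^n\frac{1}{p_i}$. Using the relation $\frac{1}{\vec{p}}=\frac{1}{\vec{q}}-\frac{\alpha}{n}$ componentwise and summing over $i$ gives $\sum_{i=1}^n\frac{1}{p_i}=\sum_{i=1}^n\frac{1}{q_i}-\alpha$, so the denominator exponent $1+\sum_{i=1}^n\frac{1}{q_i}-\alpha$ is exactly $1+\sum_{i=1}^n\frac{1}{p_i}$, as required by the statement. Hence
$$\|T_\alpha f_2\|_{L^{\vec{p}}(B)}\lesssim r^{\sum_{i=1}^n\frac{1}{p_i}}\int_{2r}^\infty\|f\|_{L^{\vec{q}}(B(x_0,t))}\frac{dt}{t^{1+\sum_{i=1}^n\frac{1}{p_i}}}.$$
Finally, to absorb the local term, I would note as in Lemma~\ref{L-T1} that
$$\|f\|_{L^{\vec{q}}(2B)}\sim r^{\sum_{i=1}^n\frac{1}{p_i}}\|f\|_{L^{\vec{q}}(2B)}\int_{2r}^\infty\frac{dt}{t^{1+\sum_{i=1}^n\frac{1}{p_i}}}\lesssim r^{\sum_{i=1}^n\frac{1}{p_i}}\int_{2r}^\infty\|f\|_{L^{\vec{q}}(B(x_0,t))}\frac{dt}{t^{1+\sum_{i=1}^n\frac{1}{p_i}}},$$
which combines with the tail estimate to give the claimed inequality. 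The main obstacle, such as it is, is purely bookkeeping: verifying that the H\"older step on mixed Lebesgue spaces produces precisely the power $\sum_{i=1}^n\frac{1}{q_i}$ (so that subtracting $\alpha$ lands on $\sum_{i=1}^n\frac{1}{p_i}$), and that the condition $0<\alpha<n/\vec{q}$ guarantees $1<\vec{p}<\infty$ so that the target norm and the H\"older duality are all legitimate.
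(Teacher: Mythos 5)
Your proposal is correct and follows essentially the same route as the paper's own proof: the identical decomposition $f=f_1+f_2$ over $2B$, the assumed $L^{\vec{q}}\to L^{\vec{p}}$ boundedness for the local piece, the Fubini--H\"older argument on the tail with the geometric equivalence $|x-y|\sim|x_0-y|$, and the same absorption of $\|f\|_{L^{\vec{q}}(2B)}$ into the integral via monotonicity. The only difference is cosmetic: you carry the exponent $1+\sum_{i=1}^n\frac{1}{q_i}-\alpha$ explicitly and then verify it equals $1+\sum_{i=1}^n\frac{1}{p_i}$, whereas the paper writes the latter exponent directly in its estimate (\ref{E-1}).
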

\begin{proof}
	For any ball $B=B(x_0,r)$, we also let $2B=B(x_0,2r)$ as before. Write $f=f_1+f_2$, with
	$f_1=f\chi_{2B}$ and $f_2=f\chi_{(2B)^c}$.
	
	By the fact that $T_\alpha$ is a sublinear operator, we have
	$$\|T_\alpha f\|_{L^{\vec{p}}(B)}\leq\|T_\alpha f_1\|_{L^{\vec{p}}(B)}+\|T_\alpha f_2\|_{L^{\vec{p}}(B)}.$$
	Noting that $f_1\in L^{\vec{q}}(\bR^n)$ and $T_\alpha$ is bounded from $L^{\vec{q}}(\bR^n)$ to $L^{\vec{p}}(\bR^n)$, we have
	$$\|T_\alpha f_1\|_{L^{\vec{p}}(B)}\leq\|T_\alpha f_1\|_{L^{\vec{p}}(\bR^n)}\lesssim\|f_1\|_{L^{\vec{q}}(\bR^n)}=
	\|f\|_{L^{\vec{q}}(2B)}.$$
	Since $x\in B$, $y\in (2B)^c$ imply $\frac{1}{2}|x_0-y|\leq |x-y|\leq \frac{3}{2}|x_0-y|$, it yields that
	$$|T_\alpha f_2(x)|\lesssim\int_{(2B)^c}\frac{|f(y)|}{|x_0-y|^{n-\alpha}}dy.$$	
	By Fubini's theorem, we have
	\begin{eqnarray*}
		\int_{(2B)^c}\frac{|f(y)|}{|x_0-y|^{n-\alpha}}dy&\sim& \int_{(2B)^c}|f(y)|\int_{|x_0-y|}^\infty\frac{dt}{t^{n+1-\alpha}}dy\\
		&\sim& \int_{2r}^\infty\int_{2r\leq |x_0-y|<t}|f(y)|dy\frac{dt}{t^{n+1-\alpha}}\\
		&\lesssim& \int_{2r}^\infty\int_{B(x_0,t)}|f(y)|dy\frac{dt}{t^{n+1-\alpha}}.
	\end{eqnarray*}
	Applying H{\"o}lder's inequality on mixed Lebesgue spaces, we obtain
	\begin{equation}\label{E-1}
	\int_{(2B)^c}\frac{|f(y)|}{|x_0-y|^{n-\alpha}}dy\lesssim\int_{2r}^\infty\|f\|_{L^{\vec{q}}(B(x_0,t))}\frac{dt}{t^{1+\sum_{i=1}^n\frac{1}{p_i}}}.
	\end{equation}
	Moreover, for all $1<\vec{q}<\infty$, we have
	$$\|T_\alpha f_2\|_{L^{\vec{p}}(B(x_0,r))}\lesssim
	r^{\sum_{i=1}^n\frac{1}{p_i}}\int_{2r}^\infty\|f\|_{L^{\vec{q}}(B(x_0,t))}\frac{dt}{t^{1+\sum_{i=1}^n\frac{1}{p_i}}}.$$
	Therefore, 
	$$\|T_\alpha f\|_{L^{\vec{p}}(B(x_0,r))}\lesssim \|f\|_{L^{\vec{q}}(2B)}+
	r^{\sum_{i=1}^n\frac{1}{p_i}}\int_{2r}^\infty\|f\|_{L^{\vec{q}}(B(x_0,t))}\frac{dt}{t^{1+\sum_{i=1}^n\frac{1}{p_i}}}.$$
	On the other hand,
	\begin{eqnarray}\label{E-2}
	\|f\|_{L^{\vec{q}}(2B)}&\sim& r^{\sum_{i=1}^n\frac{1}{p_i}}
	\|f\|_{L^{\vec{q}}(2B)}\int_{2r}^\infty\frac{dt}{t^{1+\sum_{i=1}^n\frac{1}{p_i}}}\nonumber\\
	&\lesssim& r^{\sum_{i=1}^n\frac{1}{p_i}}\int_{2r}^\infty\|f\|_{L^{\vec{q}}(B(x_0,t))}\frac{dt}{t^{1+\sum_{i=1}^n\frac{1}{p_i}}}.
	\end{eqnarray}
	Thus
	$$\|T_\alpha f\|_{L^{\vec{p}}(B(x_0,r))}\lesssim 
	r^{\sum_{i=1}^n\frac{1}{p_i}}\int_{2r}^\infty\|f\|_{L^{\vec{q}}(B(x_0,t))}\frac{dt}{t^{1+\sum_{i=1}^n\frac{1}{p_i}}}.$$
\end{proof}
Now we can present the first main result in this section.
\begin{theorem}\label{T-T1}
	Let $1<\vec{q}<\infty$, and $(\varphi_1,\varphi_2)$ satisfy the condition
	$$\int_r^\infty\frac{{\mathrm{ess}\inf}_{t<s<\infty}\varphi_1(x,s)s^{\sum_{i=1}^n\frac{1}{q_i}}}{t^{1+\sum_{i=1}^n\frac{1}{q_i}}}dt\lesssim \varphi_2(x,r).$$
	
	Suppose $T$ is a sublinear operator satisfying condition {\rm (\ref{T-1})} which is bounded on $L^{\vec{q}}(\bR^n)$.	
	Then for $1<\vec{q}<\infty$, the operator $T$ is bounded form $M^{\varphi_1}_{\vec{q}}(\bR^n)$ to $M^{\varphi_2}_{\vec{q}}(\bR^n)$. Moreover,
	$$\|Tf\|_{M^{\varphi_2}_{\vec{q}}}\lesssim\|f\|_{M^{\varphi_1}_{\vec{q}}}.$$ 
\end{theorem}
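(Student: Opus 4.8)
The plan is to reduce everything to the local estimate already established in Lemma \ref{L-T1} and then feed it into the pointwise-to-Morrey machinery governed by the hypothesis on $(\varphi_1,\varphi_2)$. Write $\sigma=\sum_{i=1}^n\frac1{q_i}$. First I would recall that for a ball $B(x_0,r)$ one has $\|\chi_{B(x_0,r)}\|_{L^{\vec q}}\sim r^{\sigma}$ (balls and cubes being comparable, and the mixed norm of the indicator of a cube factoring as a product of one-dimensional side lengths); this lets me rewrite the $M^{\varphi_2}_{\vec q}$ norm of $Tf$ as $\sup_{x_0,r}\varphi_2(x_0,r)^{-1}r^{-\sigma}\|Tf\|_{L^{\vec q}(B(x_0,r))}$ up to constants, so that the factor $r^{\sigma}$ produced by Lemma \ref{L-T1} is exactly absorbed.

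The central step is to convert the inner norm $\|f\|_{L^{\vec q}(B(x_0,t))}$ appearing in Lemma \ref{L-T1} into the essential infimum with which the hypothesis is phrased. Here I would exploit monotonicity: since $B(x_0,t)\subseteq B(x_0,s)$ for $s\ge t$, the map $t\mapsto\|f\|_{L^{\vec q}(B(x_0,t))}$ is non-decreasing, so for every $s\ge t$ one has $\|f\|_{L^{\vec q}(B(x_0,t))}\le\|f\|_{L^{\vec q}(B(x_0,s))}\le\|f\|_{M^{\varphi_1}_{\vec q}}\varphi_1(x_0,s)\|\chi_{B(x_0,s)}\|_{L^{\vec q}}$. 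Taking the essential infimum over $s\in(t,\infty)$ and using $\|\chi_{B(x_0,s)}\|_{L^{\vec q}}\sim s^{\sigma}$ yields $\|f\|_{L^{\vec q}(B(x_0,t))}\lesssim\|f\|_{M^{\varphi_1}_{\vec q}}\,{\mathrm{ess}\inf}_{t<s<\infty}\varphi_1(x_0,s)s^{\sigma}$, which is precisely the quantity occurring in the integrand of the assumed condition.

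Then I would assemble the pieces: starting from Lemma \ref{L-T1}, divide by $\varphi_2(x_0,r)r^{\sigma}$, substitute the above essential-infimum bound into the $t$-integral, and factor out $\|f\|_{M^{\varphi_1}_{\vec q}}$. Since the integrand is non-negative, $\int_{2r}^\infty\le\int_r^\infty$, so the remaining integral is controlled by the hypothesis and is $\lesssim\varphi_2(x_0,r)$. The two factors $\varphi_2(x_0,r)$ cancel, leaving a bound independent of $x_0$ and $r$; taking the supremum over all balls gives $\|Tf\|_{M^{\varphi_2}_{\vec q}}\lesssim\|f\|_{M^{\varphi_1}_{\vec q}}$.

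I do not expect a serious obstacle, since the heavy analytic work (the splitting $f=f_1+f_2$, the $L^{\vec q}$-boundedness of $T$ on the local part, and the Fubini/Hölder estimate of the tail) is already packaged in Lemma \ref{L-T1}. The only points demanding care are the justification of $\|\chi_{B(x_0,r)}\|_{L^{\vec q}}\sim r^{\sigma}$ in the mixed-norm setting and the correct direction of the monotonicity argument that manufactures the essential infimum; everything else is bookkeeping with the supremum and the hypothesis. Note in particular that Lemma \ref{L-Hardy} is not needed for this theorem, because the condition on $(\varphi_1,\varphi_2)$ is already stated in integrated form rather than as a weighted Hardy inequality.
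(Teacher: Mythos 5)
Your proof is correct, and it shares its foundation with the paper's own argument --- everything is reduced to Lemma \ref{L-T1} together with the normalization $\|\chi_{B(x_0,r)}\|_{L^{\vec q}}\sim r^{\sigma}$, where $\sigma=\sum_{i=1}^n\frac{1}{q_i}$ --- but the second half follows a genuinely different route. The paper disposes of that half by citing Lemma \ref{L-Hardy} with $v_2(r)=\varphi_2(x,r)^{-1}$, $v_1(r)=\varphi_1(x,r)^{-1}r^{-\sigma}$, $g(r)=\|f\|_{L^{\vec q}(B(x,r))}$ and $w(r)=r^{-1-\sigma}$, observing that the hypothesis on $(\varphi_1,\varphi_2)$ is exactly the uniform-in-$x$ finiteness of the constant $A$ there; you instead prove the required implication by hand, using the monotonicity of $t\mapsto\|f\|_{L^{\vec q}(B(x_0,t))}$ and the definition of the $M^{\varphi_1}_{\vec q}$ norm to get $\|f\|_{L^{\vec q}(B(x_0,t))}\lesssim\|f\|_{M^{\varphi_1}_{\vec q}}\,\mathrm{ess}\inf_{t<s<\infty}\varphi_1(x_0,s)s^{\sigma}$, and then feeding this into the integral hypothesis. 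In effect you have inlined, in the one special case needed, the proof of the sufficiency half of Lemma \ref{L-Hardy}. What the paper's route buys is brevity and reusability: the same lemma and its logarithmic variant Lemma \ref{L-Hardy-1} drive Theorems \ref{T-T2}, \ref{T-T3} and \ref{T-T4} verbatim. What your route buys is self-containedness, and it also quietly repairs a flaw in the paper's citation: Lemma \ref{L-Hardy} as stated applies to non-negative \emph{non-increasing} $g$, whereas the function $g(t)=\|f\|_{L^{\vec q}(B(x,t))}$ that must be inserted is non-decreasing, so the appeal to the lemma is not literally licensed as written (the sufficiency direction does hold for non-decreasing $g$, which is precisely what your monotonicity argument establishes). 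Your closing remark is also accurate: only the integrated form of the hypothesis is ever used, never the full ``if and only if'' Hardy characterization.
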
 
\begin{proof}
	By Lemma \ref{L-T1} and Lemma \ref{L-Hardy} with $v_2(r)=\varphi_2(x,r)^{-1}$, $v_1(r)=\varphi_1(x,r)r^{-\sum_{i=1}^n\frac{1}{q_i}}$, $g(r)=\|f\|_{L^{\vec{q}}(B(x,r))}$ and $w(r)=r^{-1-\sum_{i=1}^n\frac{1}{q_i}}$, we have
	\begin{eqnarray*}
		\|Tf\|_{M^{\varphi_2}_{\vec{q}}}&\lesssim& \sup_{x\in\bR^n,r>0}\varphi_2(x,r)^{-1}\int_{r}^\infty t^{-1-\sum_{i=1}^n\frac{1}{q_i}}
		\|f\|_{L^{\vec{q}}(B(x,t))}dt\\
		&\lesssim& \sup_{x\in\bR^n,r>0}\varphi_1(x,r)^{-1}r^{-\sum_{i=1}^n\frac{1}{q_i}}
		\|f\|_{L^{\vec{q}}(B(x,t))}\sim\|f\|_{M^{\varphi_1}_{\vec{q}}}.
	\end{eqnarray*}
\end{proof}
By taking $\vec{q}=q$ in Theorem \ref{T-T1}, we recover the result of Guliyev et al. [\cite{guliyev2011boundedness}, Theorem 4.5], which gave the boundedness of $T$ on generalized Morrey spaces.

Similarly, for the sublinear operator $T_\alpha$ with the size condition (\ref{T-2}), we have the second theorem. 
\begin{theorem}\label{T-T2}
	Let $1<\vec{q}<\infty$, $0<\alpha<n/\vec{q}$, $\frac{1}{\vec{p}}=\frac{1}{\vec{q}}-\frac{\alpha}{n}$, and $(\varphi_1,\varphi_2)$ satisfy the condition
	$$\int_r^\infty\frac{{\mathrm{ess}\inf}_{t<s<\infty}\varphi_1(x,s)s^{\sum_{i=1}^n\frac{1}{q_i}}}{t^{1+\sum_{i=1}^n\frac{1}{p_i}}}dt\lesssim \varphi_2(x,r).$$
	Suppose $T_\alpha$ is a sublinear operator satisfying condition {\rm (\ref{T-2})} which is bounded from $L^{\vec{q}}(\bR^n)$ to $L^{\vec{p}}(\bR^n)$.
	Then for $1<\vec{q}<\infty$, the operator $T_\alpha$ is bounded form $M^{\varphi_1}_{\vec{q}}(\bR^n)$ to $M^{\varphi_2}_{\vec{p}}(\bR^n)$. Moreover,
	$$\|T_\alpha f\|_{M^{\varphi_2}_{\vec{p}}}\lesssim\|f\|_{M^{\varphi_1}_{\vec{q}}}.$$ 
\end{theorem}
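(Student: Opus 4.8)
The plan is to transcribe the proof of Theorem \ref{T-T1} almost verbatim, feeding in the fractional local estimate of Lemma \ref{L-T2} in place of Lemma \ref{L-T1} and applying Lemma \ref{L-Hardy} with the exponent $\sum_{i=1}^n\frac{1}{p_i}$ now appearing in the Hardy weight. The whole argument splits into a local (ball-wise) estimate coming from Lemma \ref{L-T2} and a global estimate coming from the weighted Hardy inequality of Lemma \ref{L-Hardy}.

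First I would fix a ball $B(x,r)$ and invoke Lemma \ref{L-T2} to obtain
$$\|T_\alpha f\|_{L^{\vec{p}}(B(x,r))}\lesssim r^{\sum_{i=1}^n\frac{1}{p_i}}\int_{2r}^\infty t^{-1-\sum_{i=1}^n\frac{1}{p_i}}\|f\|_{L^{\vec{q}}(B(x,t))}\,dt.$$
The definition of the $M^{\varphi_2}_{\vec{p}}$ norm requires dividing by $\|\chi_{B(x,r)}\|_{L^{\vec{p}}}$, and since $\|\chi_{B(x,r)}\|_{L^{\vec{p}}}\sim r^{\sum_{i=1}^n\frac{1}{p_i}}$, the prefactor $r^{\sum_{i=1}^n\frac{1}{p_i}}$ cancels exactly. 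Taking the supremum over $x$ and $r$, and enlarging the integration range from $(2r,\infty)$ to $(r,\infty)$ (which only increases the non-negative right-hand side), yields
$$\|T_\alpha f\|_{M^{\varphi_2}_{\vec{p}}}\lesssim\sup_{x\in\bR^n,\,r>0}\varphi_2(x,r)^{-1}\int_r^\infty t^{-1-\sum_{i=1}^n\frac{1}{p_i}}\|f\|_{L^{\vec{q}}(B(x,t))}\,dt.$$

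Next, exactly as in the proof of Theorem \ref{T-T1}, I would apply Lemma \ref{L-Hardy} for each fixed $x$ with $g(r)=\|f\|_{L^{\vec{q}}(B(x,r))}$, $v_2(r)=\varphi_2(x,r)^{-1}$, $v_1(r)=\varphi_1(x,r)^{-1}r^{-\sum_{i=1}^n\frac{1}{q_i}}$, and $w(s)=s^{-1-\sum_{i=1}^n\frac{1}{p_i}}$. The only genuine difference from Theorem \ref{T-T1} is that the Hardy weight $w$ now carries the exponent $\sum_{i=1}^n\frac{1}{p_i}$ produced by Lemma \ref{L-T2}, whereas $v_1$ still carries $\sum_{i=1}^n\frac{1}{q_i}$ so that, using $\|\chi_{B(x,r)}\|_{L^{\vec{q}}}\sim r^{\sum_{i=1}^n\frac{1}{q_i}}$, the right-hand side ${\mathrm{ess}\sup}_r v_1(r)g(r)$ recombines (after taking $\sup_x$) into $\|f\|_{M^{\varphi_1}_{\vec{q}}}$.

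The one point deserving explicit verification is that the constant $A$ of Lemma \ref{L-Hardy} is finite. Using the identity $\big({\mathrm{ess}\sup}_{s<\tau<\infty}\varphi_1(x,\tau)^{-1}\tau^{-\sum_{i=1}^n\frac{1}{q_i}}\big)^{-1}={\mathrm{ess}\inf}_{s<\tau<\infty}\varphi_1(x,\tau)\,\tau^{\sum_{i=1}^n\frac{1}{q_i}}$, the constant becomes
$$A=\sup_{r>0}\varphi_2(x,r)^{-1}\int_r^\infty\frac{{\mathrm{ess}\inf}_{t<s<\infty}\varphi_1(x,s)\,s^{\sum_{i=1}^n\frac{1}{q_i}}}{t^{1+\sum_{i=1}^n\frac{1}{p_i}}}\,dt,$$
which is precisely the quantity that the hypothesis on $(\varphi_1,\varphi_2)$ forces to be $\lesssim 1$. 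Hence $A\lesssim 1$, Lemma \ref{L-Hardy} applies, and combining its conclusion with the two displays above gives $\|T_\alpha f\|_{M^{\varphi_2}_{\vec{p}}}\lesssim\|f\|_{M^{\varphi_1}_{\vec{q}}}$. I expect the main obstacle to be purely notational: keeping the two distinct exponents $\sum_{i=1}^n\frac{1}{q_i}$ and $\sum_{i=1}^n\frac{1}{p_i}$ in their correct positions so that the reciprocal of the essential supremum collapses into the essential infimum appearing in the hypothesis. Once the substitution in Lemma \ref{L-Hardy} is aligned this way, the argument is a line-by-line copy of the proof of Theorem \ref{T-T1}.
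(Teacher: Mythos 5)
Your proof is correct and takes essentially the same route as the paper: the paper's proof is exactly the combination of Lemma \ref{L-T2} with Lemma \ref{L-Hardy} under the substitutions you list, with your explicit verification that the constant $A$ collapses (via the essential sup/inf duality) into the hypothesis on $(\varphi_1,\varphi_2)$ being the step the paper leaves implicit. One small remark: your choice $v_1(r)=\varphi_1(x,r)^{-1}r^{-\sum_{i=1}^n\frac{1}{q_i}}$ is the correct one; the paper's proof writes $v_1(r)=\varphi_1(x,r)r^{-\sum_{i=1}^n\frac{1}{q_i}}$ (a typo, missing the inverse), although its subsequent displayed chain uses the same quantity you do.
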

\begin{proof}
	By Lemma \ref{L-T2} and Lemma \ref{L-Hardy} with $v_2(r)=\varphi_2(x,r)^{-1}$, $v_1(r)=\varphi_1(x,r)r^{-\sum_{i=1}^n\frac{1}{q_i}}$, $g(r)=\|f\|_{L^{\vec{q}}(B(x,r))}$ and $w(r)=r^{-1-\sum_{i=1}^n\frac{1}{p_i}}$, we have
	\begin{eqnarray*}
		\|T_\alpha f\|_{M^{\varphi_2}_{\vec{p}}}&\lesssim& \sup_{x\in\bR^n,r>0}\varphi_2(x,r)^{-1}\int_{2r}^\infty t^{-1-\sum_{i=1}^n\frac{1}{p_i}}
		\|f\|_{L^{\vec{q}}(B(x,t))}dt\\
		&\lesssim& \sup_{x\in\bR^n,r>0}\varphi_1(x,r)^{-1}r^{-\sum_{i=1}^n\frac{1}{q_i}}
		\|f\|_{L^{\vec{q}}(B(x,t))}\sim\|f\|_{M^{\varphi_1}_{\vec{q}}}.
	\end{eqnarray*}
\end{proof}
By taking $\vec{q}=q$ in Theorem \ref{T-T2}, we recover the result of [\cite{guliyev2011boundedness}, Theorem 5.4]. 

\section{Sublinear operators $T_b$ and $T_{b,\alpha}$ in spaces $M^\varphi_{\vec{q}}(\bR^n)$}
In this section, we investigate the boundedness of $T_\alpha$ and $T_{b,\alpha}$ satisfying the size conditions (\ref{T-3}) and (\ref{T-4}) respectively, on generalized mixed Morrey space $M^\varphi_{\vec{q}}(\bR^n)$.

First, we review the definition of ${\rm BMO}(\bR^n)$, the bounded mean oscillation space.
A function $f\in L_{{\rm loc}}(\bR^n)$ belongs to ${\rm BMO}(\bR^n)$ if 
\begin{equation}\label{BMO}
\|f\|_{{\rm BMO}}=
\sup_{x\in\bR^n,r>0}\frac{1}{|B(x,r)|}\int_{B(x,r)}|f(y)-f_{B(x,r)}|dy<\infty.
\end{equation}
If one regards two functions whose difference is a constant as one, then the
space ${\rm BMO}(\bR^n)$ is a Banach space with respect to norm $\|\cdot\|_{{\rm BMO}}$.
The John-Nirenberg ineuqalitiy for ${\rm BMO}(\bR^n)$ yields that for any $1<q<\infty$ and $f\in {\rm BMO}(\bR^n)$, the ${\rm BMO}(\bR^n)$ norm of $f$ is equivalent to 
$$\|f\|_{{\rm BMO}^q}=
\sup_{x\in\bR^n,r>0}\l(\frac{1}{|B(x,r)|}\int_{B(x,r)}|f(y)-f_{B(x,r)}|^qdy\r)^{\frac{1}{q}}.$$
Recall that for any $\vec{q}=(q_1,\cdots,q_n)\in (1,\infty)^n$, the John-Nirenberg inequality for mixed norm spaces [\cite{ho2018mixed}] shows that the ${\rm BMO}(\bR^n)$ norm of all $f\in {\rm BMO}(\bR^n)$ is also equivalent to 
\begin{equation}\label{mbmo}
\|f\|_{{\rm BMO}^{\vec{q}}}=
\sup_{x\in\bR^n,r>0}\frac{\|(f-f_{B(x,r)})\chi_{B(x,r)}\|_{L^{\vec{q}}}}{\|\chi_{B(x,r)}\|_{L^{\vec{q}}}}.
\end{equation}
The following property for ${\rm BMO}(\bR^n)$ functions is valid.
\begin{lemma}
	Let $f\in {\rm BMO}(\bR^n)$. Then for all $0<2r<t$, we have
	\begin{equation}\label{ln}
	|f_{B(x,r)}-f_{B(x,t)}|\lesssim \|f\|_{{\rm BMO}}\ln\frac{t}{r}.
	\end{equation}
\end{lemma}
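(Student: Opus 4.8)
The plan is to exploit the standard dyadic chaining between the two balls. The naive estimate comparing $B(x,r)$ and $B(x,t)$ in a single step only yields the polynomial bound $(t/r)^n\|f\|_{{\rm BMO}}$, because one pays the full volume ratio $|B(x,t)|/|B(x,r)|=(t/r)^n$; the logarithmic growth must instead be extracted by passing through a chain of intermediate balls of comparable radii.

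First I would record the elementary comparison inequality: whenever $B'\subset B$ are two balls, one has
$$|f_{B'}-f_B|\leq\frac{1}{|B'|}\int_{B'}|f(y)-f_B|\,dy\leq\frac{|B|}{|B'|}\cdot\frac{1}{|B|}\int_{B}|f(y)-f_B|\,dy\leq\frac{|B|}{|B'|}\|f\|_{{\rm BMO}}.$$
Applied to two concentric balls whose radii differ by a factor of $2$, the ratio $|B|/|B'|$ equals $2^n$, so adjacent dyadic averages differ by at most $2^n\|f\|_{{\rm BMO}}$.

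Next, choose the integer $N\geq1$ with $2^Nr\leq t<2^{N+1}r$, so that $N\sim\log_2(t/r)$. I would telescope along the chain
$$B(x,r),\ B(x,2r),\ \dots,\ B(x,2^Nr),\ B(x,t),$$
writing $f_{B(x,r)}-f_{B(x,t)}$ as the sum of the consecutive differences. Each of the $N$ dyadic steps contributes at most $2^n\|f\|_{{\rm BMO}}$ by the inequality above, while the final step from $B(x,2^Nr)$ to $B(x,t)$ contributes at most $\frac{|B(x,t)|}{|B(x,2^Nr)|}\|f\|_{{\rm BMO}}\leq 2^n\|f\|_{{\rm BMO}}$, since those two radii are comparable. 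Summing the $N+1$ steps gives
$$|f_{B(x,r)}-f_{B(x,t)}|\leq 2^n(N+1)\|f\|_{{\rm BMO}}\lesssim\|f\|_{{\rm BMO}}\bigl(1+\log_2(t/r)\bigr).$$

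Finally I would absorb the additive constant: since the hypothesis $0<2r<t$ forces $\log_2(t/r)>1$, we have $1+\log_2(t/r)\lesssim\log_2(t/r)\sim\ln(t/r)$, which yields the claimed estimate \eqref{ln}. The only real obstacle is the recognition that the two balls cannot be compared in a single step without destroying the logarithm; once the dyadic chain is in place, every individual step is routine, and the sole bookkeeping is the choice of $N$ together with the non-dyadic remainder step $B(x,2^Nr)\to B(x,t)$.
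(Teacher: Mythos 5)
Your proof is correct: the elementary comparison inequality, the dyadic chain $B(x,r)\subset B(x,2r)\subset\cdots\subset B(x,2^Nr)\subset B(x,t)$ with $2^Nr\leq t<2^{N+1}r$, and the absorption of the additive constant using $\log_2(t/r)>1$ (guaranteed by the hypothesis $0<2r<t$) are all sound. Note that the paper itself states this lemma without proof, quoting it as a known property of ${\rm BMO}(\bR^n)$ functions; your dyadic telescoping argument is precisely the standard proof of this fact, so there is no discrepancy of approach to report.
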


We first prove two lemmas, which give us the explicit estimats for the $L^{\vec{q}}(\bR^n)$ norm of $T_b$ and $T_{b,\alpha}$ on a given ball $B(x_0,r)$.
\begin{lemma}\label{L-T3}
	Let $1<\vec{q}<\infty$, $b\in {\rm BMO}(\bR^n)$, $T_b$ be a sublinear operator satisfying condition {\rm (\ref{T-3})}, and bounded on $L^{\vec{q}}(\bR^n)$.
	
	Then for $1<\vec{q}<\infty$, the inequality 
	$$\|T_bf\|_{L^{\vec{q}}(B(x_0,r))}\lesssim r^{\sum_{i=1}^n\frac{1}{q_i}}\int_{2r}^\infty \l(1+\ln\frac{t}{r}\r)t^{-1-\sum_{i=1}^n\frac{1}{q_i}}
	\|f\|_{L^{\vec{q}}(B(x_0,t))}dt$$
	holds for any ball $B(x_0,r)$ and all $f\in L^{\vec{q}}_{{\rm loc}}(\bR^n)$.
\end{lemma}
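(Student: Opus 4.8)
The plan is to imitate the proof of Lemma \ref{L-T1}, splitting $f$ into a local and a non-local part, controlling the local part by the assumed $L^{\vec q}(\bR^n)$-boundedness of $T_b$ and the non-local part by the size condition (\ref{T-3}). The one genuinely new ingredient is the factor $|b(x)-b(y)|$, which must be absorbed into $\|b\|_{{\rm BMO}}$; it is exactly this absorption, carried out across the dyadic scales $t\ge 2r$, that produces the logarithmic weight $\l(1+\ln\frac{t}{r}\r)$. Concretely, I would fix $B=B(x_0,r)$, write $f=f_1+f_2$ with $f_1=f\chi_{2B}$ and $f_2=f\chi_{(2B)^c}$, and use sublinearity to bound $\|T_bf\|_{L^{\vec q}(B)}\le\|T_bf_1\|_{L^{\vec q}(B)}+\|T_bf_2\|_{L^{\vec q}(B)}$. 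The local term is immediate: boundedness of $T_b$ gives $\|T_bf_1\|_{L^{\vec q}(B)}\lesssim\|f\|_{L^{\vec q}(2B)}$, and this is reabsorbed at the very end through the elementary bound (already used in Lemma \ref{L-T1}) $\|f\|_{L^{\vec q}(2B)}\lesssim r^{\sum_{i=1}^n 1/q_i}\int_{2r}^\infty\|f\|_{L^{\vec q}(B(x_0,t))}t^{-1-\sum_{i=1}^n 1/q_i}dt$, which is dominated by the desired right-hand side since $1+\ln\frac{t}{r}\ge 1$.

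The core of the argument is the non-local term. For $x\in B$, $y\in(2B)^c$ the size condition (\ref{T-3}) together with $|x-y|\sim|x_0-y|$ gives $|T_bf_2(x)|\lesssim\int_{(2B)^c}\frac{|b(x)-b(y)|}{|x_0-y|^n}|f(y)|dy$. Writing $b_B:=b_{B(x_0,r)}$, I would split $|b(x)-b(y)|\le|b(x)-b_B|+|b(y)-b_B|$, which breaks $|T_bf_2(x)|$ into two pieces. In the first piece the factor $|b(x)-b_B|$ comes out of the $y$-integral, leaving the $x$-independent quantity $\int_{(2B)^c}\frac{|f(y)|}{|x_0-y|^n}dy$, estimated verbatim as in Lemma \ref{L-T1} (Fubini plus mixed H\"older) by $\int_{2r}^\infty\|f\|_{L^{\vec q}(B(x_0,t))}t^{-1-\sum_{i=1}^n 1/q_i}dt$. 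Taking the $L^{\vec q}(B)$-norm of $|b(x)-b_B|$ and invoking the mixed John--Nirenberg inequality (\ref{mbmo}) yields $\|(b-b_B)\chi_B\|_{L^{\vec q}}\lesssim\|b\|_{{\rm BMO}}\|\chi_B\|_{L^{\vec q}}\sim r^{\sum_{i=1}^n 1/q_i}$, so this piece carries no logarithm.

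The logarithm is generated by the second piece, which is already independent of $x$. Running Fubini as before reduces it to $\int_{2r}^\infty\big(\int_{B(x_0,t)}|b(y)-b_B||f(y)|dy\big)t^{-n-1}dt$, and mixed H\"older bounds the inner integral by $\|(b-b_B)\chi_{B(x_0,t)}\|_{L^{\vec q'}}\|f\|_{L^{\vec q}(B(x_0,t))}$. I would then split $b-b_B=(b-b_{B(x_0,t)})+(b_{B(x_0,t)}-b_B)$: the first summand contributes $\|b\|_{{\rm BMO}}t^{\sum_{i=1}^n 1/q_i'}$ by (\ref{mbmo}), while the constant second summand contributes $|b_{B(x_0,t)}-b_B|\,\|\chi_{B(x_0,t)}\|_{L^{\vec q'}}\lesssim\|b\|_{{\rm BMO}}\l(1+\ln\frac{t}{r}\r)t^{\sum_{i=1}^n 1/q_i'}$ by the logarithmic property (\ref{ln}). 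Since $\sum_{i=1}^n 1/q_i'=n-\sum_{i=1}^n 1/q_i$, one has $t^{\sum_{i=1}^n 1/q_i'}t^{-n-1}=t^{-1-\sum_{i=1}^n 1/q_i}$; multiplying the resulting $x$-independent bound by $\|\chi_B\|_{L^{\vec q}}\sim r^{\sum_{i=1}^n 1/q_i}$ to obtain its $L^{\vec q}(B)$-norm delivers exactly the weighted integral in the statement. Collecting the local term and the two pieces finishes the proof.

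I expect the main obstacle to be bookkeeping rather than any conceptual difficulty: one must cleanly separate $|b(x)-b(y)|$ into an $x$-factor (to be measured in $L^{\vec q}(B)$) and a $y$-factor (to be integrated against $f$), and then track, across all scales $t\ge 2r$, the mean-oscillation comparison $|b_{B(x_0,t)}-b_B|$, which is the sole origin of the $\ln\frac{t}{r}$ term. One must also be careful to apply the mixed H\"older and mixed John--Nirenberg inequalities with the correct conjugate vector $\vec q'$ and to use the volume identities $\|\chi_{B(x_0,t)}\|_{L^{\vec q'}}\sim t^{\sum_{i=1}^n 1/q_i'}$ consistently so that the exponents collapse to $-1-\sum_{i=1}^n 1/q_i$ as required.
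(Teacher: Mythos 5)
Your proposal is correct and follows essentially the same route as the paper's proof: the same splitting $f=f_1+f_2$, the same insertion of $b_B$ to separate $|b(x)-b(y)|$ into an $x$-piece (no logarithm, handled by the mixed John--Nirenberg inequality (\ref{mbmo})) and a $y$-piece (Fubini plus mixed H\"older, with the logarithm arising from $|b_{B(x_0,t)}-b_{B(x_0,r)}|$ via (\ref{ln})), and the same final absorption of the local term $\|f\|_{L^{\vec{q}}(2B)}$. The only, inconsequential, difference is that you apply mixed H\"older before splitting $b-b_B$ inside the $L^{\vec{q}'}$ norm, whereas the paper splits first and then estimates each piece.
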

\begin{proof}
	For any ball $B=B(x_0,r)$, let $2B=B(x_0,2r)$. Write $f$ as $f=f_1+f_2$, where
	$f_1=f\chi_{2B}$ and $f_2=f\chi_{(2B)^c}$.
	
	Since $T_\alpha$ is a sublinear operator, we have
	$$\|T_b f\|_{L^{\vec{q}}(B)}\leq\|T_b f_1\|_{L^{\vec{q}}(B)}+\|T_b f_2\|_{L^{\vec{q}}(B)}.$$
	Noting that $f_1\in L^{\vec{q}}(\bR^n)$ and $T_\alpha$ is bounded in $L^{\vec{q}}(\bR^n)$, we have
	$$\|T_b f_1\|_{L^{\vec{q}}(B)}\leq\|T_b f_1\|_{L^{\vec{q}}(\bR^n)}\lesssim\|f_1\|_{L^{\vec{q}}(\bR^n)}=
	\|f\|_{L^{\vec{q}}(2B)}.$$
	Since $x\in B$, $y\in (2B)^c$ imply $\frac{1}{2}|x_0-y|\leq |x-y|\leq \frac{3}{2}|x_0-y|$, we get
	\begin{eqnarray*}
		|T_bf_2(x)|&\lesssim&\int_{\bR^n}\frac{|b(x)-b(y)|}{|x-y|^n}|f(y)|dy\\
		&\sim&\int_{(2B)^c}\frac{|b(x)-b(y)|}{|x_0-y|^n}|f(y)|dy.
	\end{eqnarray*}	
	By using generalized Minkowski's inequality on mixed Lebesgue spaces, we have
	\begin{eqnarray*}
		\|T_b f_2\|_{L^{\vec{q}}(B))}
		&\lesssim&\l\|\int_{(2B)^c}\frac{|b(\cdot)-b(y)|}{|x_0-y|^n}|f(y)|dy\r\|_{L^{\vec{q}}(B(x_0,r))}\\
		&\lesssim&\l\|\int_{(2B)^c}\frac{|b_B-b(y)|}{|x_0-y|^n}|f(y)|dy\r\|_{L^{\vec{q}}(B(x_0,r))}\\
		&+&\l\|\int_{(2B)^c}\frac{|b(\cdot)-b_B|}{|x_0-y|^n}|f(y)|dy\r\|_{L^{\vec{q}}(B(x_0,r))}
		\\
		&=&I_1+I_2.
	\end{eqnarray*}
	For the term $I_1$, we have
	\begin{eqnarray*}
		I_1&\sim& r^{\sum_{i=1}^n\frac{1}{q_i}}\int_{(2B)^c}\frac{|b_B-b(y)|}{|x_0-y|^n}|f(y)|dy\\
		&\sim& r^{\sum_{i=1}^n\frac{1}{q_i}}\int_{(2B)^c}|b(y)-b_B||f(y)|\int_{|x_0-y|}^\infty\frac{dt}{t^{n+1}}dy\\
		&\sim& r^{\sum_{i=1}^n\frac{1}{q_i}}\int_{2r}^\infty\int_{2r\leq |x_0-y|<t}|b(y)-b_B||f(y)|dy\frac{dt}{t^{n+1}}\\
		&\lesssim& r^{\sum_{i=1}^n\frac{1}{q_i}}\int_{2r}^\infty\int_{B(x_0,t)}|b(y)-b_B||f(y)|dy\frac{dt}{t^{n+1}}.
	\end{eqnarray*}
	Applying H{\"o}lder's inequality and (\ref{mbmo}), (\ref{ln}), we get
	\begin{eqnarray*}
		I_1
		&\lesssim& r^{\sum_{i=1}^n\frac{1}{q_i}}\int_{2r}^\infty\int_{B(x_0,t)}|b(y)-b_{B(x_0,t)}||f(y)|dy\frac{dt}{t^{n+1}}\\
		&+&r^{\sum_{i=1}^n\frac{1}{q_i}}\int_{2r}^\infty\int_{B(x_0,t)}|b_{B(x_0,r)}-b_{B(x_0,t)}||f(y)|dy\frac{dt}{t^{n+1}}\\
		&\lesssim& r^{\sum_{i=1}^n\frac{1}{q_i}}\int_{2r}^\infty\int_{B(x_0,t)}\|(b(\cdot)-b_{B(x_0,t)})\chi_{B(x_0,t)}
		\|_{L^{\vec{q}'}}\|f\|_{L^{\vec{q}}(B(x_0,t))}dy\frac{dt}{t^{n+1}}\\
		&+&r^{\sum_{i=1}^n\frac{1}{q_i}}\int_{2r}^\infty|b_{B(x_0,r)}-b_{B(x_0,t)}|~\|f\|_{L^{\vec{q}}(B(x_0,t))}
		\frac{dt}{t^{1+\sum_{i=1}^n\frac{1}{q_i}}}\\
		&\lesssim& \|b\|_{{\rm BMO}}r^{\sum_{i=1}^n\frac{1}{q_i}}\int_{2r}^\infty\l(1+\ln\frac{t}{r}\r)\|f\|_{L^{\vec{q}}(B(x_0,t))}
		\frac{dt}{t^{1+\sum_{i=1}^n\frac{1}{q_i}}}.
	\end{eqnarray*}
	In order to estimate $I_2$, note that 
	$$I_2=\int_{(2B)^c}\frac{|f(y)|}{|x_0-y|^n}dy\times\l\|b(\cdot)-b_B\r\|_{L^{\vec{q}}(B(x_0,r))}.$$
	It follows from (\ref{mbmo}) that 
	$$I_2\lesssim\|b\|_{{\rm BMO}}r^{\sum_{i=1}^n\frac{1}{q_i}}\int_{(2B)^c}\frac{|f(y)|}{|x_0-y|^n}dy.$$
	Thus by (\ref{E-1}), we get
	\begin{eqnarray*}
		I_2&\lesssim& \|b\|_{{\rm BMO}}r^{\sum_{i=1}^n\frac{1}{q_i}}\int_{2r}^\infty\|f\|_{L^{\vec{q}}(B(x_0,t))}
		\frac{dt}{t^{1+\sum_{i=1}^n\frac{1}{q_i}}}.
	\end{eqnarray*}
	Summing up $I_1$ and $I_2$, we get
	$$\|T_bf_2\|_{L^{\vec{q}}(B)}\lesssim r^{\sum_{i=1}^n\frac{1}{q_i}}\int_{2r}^\infty \l(1+\ln\frac{t}{r}\r)t^{-1-\sum_{i=1}^n\frac{1}{q_i}}
	\|f\|_{L^{\vec{q}}(B(x_0,t))}dt.$$
	Therefore, by (\ref{E-2}), there holds
	\begin{eqnarray*}
		\|T_bf_2\|_{L^{\vec{q}}(B)}&\lesssim& \|f\|_{L^{\vec{q}}(2B)}
		+r^{\sum_{i=1}^n\frac{1}{q_i}}\int_{2r}^\infty \l(1+\ln\frac{t}{r}\r)t^{-1-\sum_{i=1}^n\frac{1}{q_i}}
		\|f\|_{L^{\vec{q}}(B(x_0,t))}dt\\
		&\lesssim&r^{\sum_{i=1}^n\frac{1}{q_i}}\int_{2r}^\infty \l(1+\ln\frac{t}{r}\r)t^{-1-\sum_{i=1}^n\frac{1}{q_i}}
		\|f\|_{L^{\vec{q}}(B(x_0,t))}dt.
	\end{eqnarray*}
	We are done.
\end{proof}

The second lemma in this section is about the estimates of $\|T_{b,\alpha} f\|_{L^{\vec{p}}(B(x_0,r))}$.
\begin{lemma}\label{L-T4}
	Let $1<\vec{q}<\infty$, $0<\alpha<n/\vec{q}$, $\frac{1}{\vec{p}}=\frac{1}{\vec{q}}-\frac{\alpha}{n}$, $b\in {\rm BMO}(\bR^n)$, $T_{b,\alpha}$ be a sublinear operator satisfying condition {\rm (\ref{T-4})}, and bounded from $L^{\vec{q}}(\bR^n)$ to $L^{\vec{p}}(\bR^n)$.
	
	Then for $1<\vec{q}<\infty$, the inequality 
	$$\|T_{b,\alpha}f\|_{L^{\vec{p}}(B(x_0,r))}\lesssim r^{\sum_{i=1}^n\frac{1}{p_i}}\int_{2r}^\infty \l(1+\ln\frac{t}{r}\r)t^{-1-\sum_{i=1}^n\frac{1}{p_i}}
	\|f\|_{L^{\vec{q}}(B(x_0,t))}dt$$
	holds for any ball $B(x_0,r)$ and all $f\in L^{\vec{q}}_{{\rm loc}}(\bR^n)$.
\end{lemma}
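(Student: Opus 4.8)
The plan is to mirror the proof of Lemma \ref{L-T3} verbatim in structure, replacing the kernel exponent $n$ by $n-\alpha$ and tracking how this change interacts with the gain in integrability encoded by $\frac{1}{\vec{p}}=\frac{1}{\vec{q}}-\frac{\alpha}{n}$. First I would fix $B=B(x_0,r)$, set $2B=B(x_0,2r)$, and split $f=f_1+f_2$ with $f_1=f\chi_{2B}$ and $f_2=f\chi_{(2B)^c}$. Sublinearity gives $\|T_{b,\alpha}f\|_{L^{\vec{p}}(B)}\le\|T_{b,\alpha}f_1\|_{L^{\vec{p}}(B)}+\|T_{b,\alpha}f_2\|_{L^{\vec{p}}(B)}$. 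The local piece is immediate: since $f_1\in L^{\vec{q}}(\bR^n)$ and $T_{b,\alpha}$ is bounded from $L^{\vec{q}}(\bR^n)$ to $L^{\vec{p}}(\bR^n)$ by hypothesis, $\|T_{b,\alpha}f_1\|_{L^{\vec{p}}(B)}\lesssim\|f\|_{L^{\vec{q}}(2B)}$, and this term will be absorbed into the final integral via (\ref{E-2}).

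For the global piece I would invoke the size condition (\ref{T-4}) together with the geometric equivalence $\frac12|x_0-y|\le|x-y|\le\frac32|x_0-y|$ valid for $x\in B$, $y\in(2B)^c$, to get the pointwise bound $|T_{b,\alpha}f_2(x)|\lesssim\int_{(2B)^c}\frac{|b(x)-b(y)|}{|x_0-y|^{n-\alpha}}|f(y)|dy$. Writing $b(x)-b(y)=(b(x)-b_B)+(b_B-b(y))$ with $b_B=b_{B(x_0,r)}$ and applying the generalized Minkowski inequality on mixed Lebesgue spaces splits $\|T_{b,\alpha}f_2\|_{L^{\vec{p}}(B)}$ into two terms $I_1$ and $I_2$, exactly as in Lemma \ref{L-T3}.

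To estimate $I_1$ I would pull out $\|\chi_B\|_{L^{\vec{p}}}\sim r^{\sum_{i=1}^n\frac{1}{p_i}}$, linearize the kernel by Fubini through $|x_0-y|^{-(n-\alpha)}\sim\int_{|x_0-y|}^\infty t^{-(n+1-\alpha)}dt$, and interchange the order of integration. A further splitting $b_B-b(y)=(b_{B(x_0,t)}-b(y))+(b_B-b_{B(x_0,t)})$, Hölder's inequality on mixed Lebesgue spaces in the dual pair $(\vec{q},\vec{q}')$, the John--Nirenberg equivalence (\ref{mbmo}) in the form $\|(b-b_{B(x_0,t)})\chi_{B(x_0,t)}\|_{L^{\vec{q}'}}\lesssim\|b\|_{{\rm BMO}}\|\chi_{B(x_0,t)}\|_{L^{\vec{q}'}}\sim\|b\|_{{\rm BMO}}\,t^{\sum_{i=1}^n(1-\frac{1}{q_i})}$, and the logarithmic bound (\ref{ln}) for $|b_B-b_{B(x_0,t)}|$ then control both contributions. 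The decisive point is the exponent count, and I expect it to be the only real obstacle: the $L^{\vec{q}'}$ norm of $\chi_{B(x_0,t)}$ contributes $t^{\,n-\sum_{i=1}^n\frac{1}{q_i}}$, and dividing by the $t^{\,n+1-\alpha}$ produced by the kernel leaves $t^{\,-1-\sum_{i=1}^n\frac{1}{q_i}+\alpha}$; since $\sum_{i=1}^n\frac{1}{q_i}=\sum_{i=1}^n\frac{1}{p_i}+\alpha$ by the hypothesis $\frac{1}{\vec{p}}=\frac{1}{\vec{q}}-\frac{\alpha}{n}$, this is exactly $t^{\,-1-\sum_{i=1}^n\frac{1}{p_i}}$, matching the target, while the logarithmic factor supplies the $(1+\ln\frac{t}{r})$ weight.

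For $I_2$ the factor $b(\cdot)-b_B$ comes out of the inner integral, so $I_2=\|(b(\cdot)-b_B)\chi_B\|_{L^{\vec{p}}}\int_{(2B)^c}\frac{|f(y)|}{|x_0-y|^{n-\alpha}}dy$; by (\ref{mbmo}) the first factor is $\lesssim\|b\|_{{\rm BMO}}\,r^{\sum_{i=1}^n\frac{1}{p_i}}$, and the integral is precisely the quantity already bounded in (\ref{E-1}) by $\int_{2r}^\infty\|f\|_{L^{\vec{q}}(B(x_0,t))}\,t^{-1-\sum_{i=1}^n\frac{1}{p_i}}dt$, which is dominated by the log-weighted integral since $1+\ln\frac{t}{r}\ge1$. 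Adding $I_1$ and $I_2$, each is controlled by $r^{\sum_{i=1}^n\frac{1}{p_i}}\int_{2r}^\infty(1+\ln\frac{t}{r})\,t^{-1-\sum_{i=1}^n\frac{1}{p_i}}\|f\|_{L^{\vec{q}}(B(x_0,t))}dt$; finally (\ref{E-2}) absorbs the remaining local term $\|f\|_{L^{\vec{q}}(2B)}$ into the same integral, which completes the proof.
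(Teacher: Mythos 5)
Your proposal is correct and follows essentially the same route as the paper's proof: the same $f=f_1+f_2$ splitting, the same $I_1$/$I_2$ decomposition via the generalized Minkowski inequality, Hölder in the dual pair $(\vec{q},\vec{q}')$ together with (\ref{mbmo}) and (\ref{ln}) for $I_1$, factoring out $\|(b(\cdot)-b_B)\chi_B\|_{L^{\vec{p}}}$ and invoking (\ref{E-1}) for $I_2$, and absorbing the local term via (\ref{E-2}). Your exponent bookkeeping, using $\sum_{i=1}^n\frac{1}{q_i}=\sum_{i=1}^n\frac{1}{p_i}+\alpha$ to convert $t^{-1-\sum_i 1/q_i+\alpha}$ into $t^{-1-\sum_i 1/p_i}$, is exactly the computation the paper carries out.
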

\begin{proof}
	For any ball $B=B(x_0,r)$, let $2B=B(x_0,2r)$. Write $f$ as $f=f_1+f_2$, where
	$f_1=f\chi_{2B}$ and $f_2=f\chi_{(2B)^c}$ as before.
	
	Since $T_{b,\alpha}$ is a sublinear operator, we have
	$$\|T_{b,\alpha} f\|_{L^{\vec{q}}(B)}\leq\|T_{b,\alpha} f_1\|_{L^{\vec{q}}(B)}+\|T_{b,\alpha} f_2\|_{L^{\vec{q}}(B)}.$$
	Noting that $f_1\in L^{\vec{q}}(\bR^n)$ and $T_{b,\alpha}$ is bounded from $L^{\vec{q}}(\bR^n)$ to $L^{\vec{p}}(\bR^n)$, we have
	$$\|T_{b,\alpha}f_1\|_{L^{\vec{q}}(B)}\leq\|T_{b,\alpha} f_1\|_{L^{\vec{q}}(\bR^n)}\lesssim\|f_1\|_{L^{\vec{q}}(\bR^n)}=
	\|f\|_{L^{\vec{q}}(2B)}.$$
	Since $x\in B$, $y\in (2B)^c$ imply $\frac{1}{2}|x_0-y|\leq |x-y|\leq \frac{3}{2}|x_0-y|$, we get
	\begin{eqnarray*}
		|T_{b,\alpha}f_2(x)|&\lesssim&\int_{\bR^n}\frac{|b(x)-b(y)|}{|x-y|^{n-\alpha}}|f(y)|dy\\
		&\sim&\int_{(2B)^c}\frac{|b(x)-b(y)|}{|x_0-y|^{n-\alpha}}|f(y)|dy.
	\end{eqnarray*}	
	By using generalized Minkowski's inequality, we have
	\begin{eqnarray*}
		\|T_{b,\alpha} f_2\|_{L^{\vec{p}}(B))}&\lesssim&\l\|\int_{(2B)^c}\frac{|b(\cdot)-b(y)|}{|x_0-y|^{n-\alpha}}|f(y)|dy\r\|_{L^{\vec{p}}(B(x_0,r))}\\
		&\lesssim&\l\|\int_{(2B)^c}\frac{|b_B-b(y)|}{|x_0-y|^{n-\alpha}}|f(y)|dy\r\|_{L^{\vec{p}}(B(x_0,r))}\\
		&+&\l\|\int_{(2B)^c}\frac{|b(\cdot)-b_B|}{|x_0-y|^{n-\alpha}}|f(y)|dy\r\|_{L^{\vec{p}}(B(x_0,r))}
		\\
		&=&I_1+I_2.
	\end{eqnarray*}
	For the term $I_1$, we have
	\begin{eqnarray*}
		I_1&\sim& r^{\sum_{i=1}^n\frac{1}{p_i}}\int_{(2B)^c}\frac{|b_B-b(y)|}{|x_0-y|^{n-\alpha}}|f(y)|dy\\
		&\sim& r^{\sum_{i=1}^n\frac{1}{p_i}}\int_{(2B)^c}|b(y)-b_B||f(y)|\int_{|x_0-y|}^\infty\frac{dt}{t^{n+1-\alpha}}dy\\
		&\sim& r^{\sum_{i=1}^n\frac{1}{p_i}}\int_{2r}^\infty\int_{2r\leq |x_0-y|<t}|b(y)-b_B||f(y)|dy\frac{dt}{t^{n+1-\alpha}}\\
		&\lesssim& r^{\sum_{i=1}^n\frac{1}{p_i}}\int_{2r}^\infty\int_{B(x_0,t)}|b(y)-b_B||f(y)|dy\frac{dt}{t^{n+1-\alpha}}.
	\end{eqnarray*}
	Applying H{\"o}lder's inequality and (\ref{mbmo}), (\ref{ln}), we get
	\begin{eqnarray*}
		I_1
		&\lesssim& r^{\sum_{i=1}^n\frac{1}{p_i}}\int_{2r}^\infty\int_{B(x_0,t)}|b(y)-b_{B(x_0,t)}||f(y)|dy\frac{dt}{t^{n+1-\alpha}}\\
		&+&r^{\sum_{i=1}^n\frac{1}{p_i}}\int_{2r}^\infty\int_{B(x_0,t)}|b_{B(x_0,r)}-b_{B(x_0,t)}||f(y)|dy\frac{dt}{t^{n+1-\alpha}}\\
		&\lesssim& r^{\sum_{i=1}^n\frac{1}{p_i}}\int_{2r}^\infty\int_{B(x_0,t)}\|(b(\cdot)-b_{B(x_0,t)})\chi_{B(x_0,t)}
		\|_{L^{\vec{q}'}}\|f\|_{L^{\vec{q}}(B(x_0,t))}dy\frac{dt}{t^{n+1-\alpha}}\\
		&+&r^{\sum_{i=1}^n\frac{1}{p_i}}\int_{2r}^\infty|b_{B(x_0,r)}-b_{B(x_0,t)}|~\|f\|_{L^{\vec{q}}(B(x_0,t))}
		\frac{dt}{t^{1-\alpha+\sum_{i=1}^n\frac{1}{q_i}}}\\
		&\lesssim& \|b\|_{{\rm BMO}}r^{\sum_{i=1}^n\frac{1}{p_i}}\int_{2r}^\infty\l(1+\ln\frac{t}{r}\r)\|f\|_{L^{\vec{q}}(B(x_0,t))}
		\frac{dt}{t^{1+\sum_{i=1}^n\frac{1}{p_i}}}.
	\end{eqnarray*}
	For the term $I_2$, note that 
	$$I_2=\int_{(2B)^c}\frac{|f(y)|}{|x_0-y|^{n-\alpha}}dy\times\l\|b(\cdot)-b_B\r\|_{L^{\vec{p}}(B(x_0,r))}.$$
	It follows from (\ref{mbmo}) that 
	$$I_2\lesssim\|b\|_{{\rm BMO}}r^{\sum_{i=1}^n\frac{1}{p_i}}\int_{(2B)^c}\frac{|f(y)|}{|x_0-y|^{n-\alpha}}dy.$$
	Thus by (\ref{E-1}), we get
	\begin{eqnarray*}
		I_2&\lesssim& \|b\|_{{\rm BMO}}r^{\sum_{i=1}^n\frac{1}{p_i}}\int_{2r}^\infty\|f\|_{L^{\vec{q}}(B(x_0,t))}
		\frac{dt}{t^{1+\sum_{i=1}^n\frac{1}{p_i}}}.
	\end{eqnarray*}
	Summing up $I_1$ and $I_2$, we get
	$$\|T_{b,\alpha}f_2\|_{L^{\vec{p}}(B)}\lesssim r^{\sum_{i=1}^n\frac{1}{p_i}}\int_{2r}^\infty \l(1+\ln\frac{t}{r}\r)t^{-1-\sum_{i=1}^n\frac{1}{p_i}}
	\|f\|_{L^{\vec{q}}(B(x_0,t))}dt.$$
	Therefore, by (\ref{E-2}), there holds
	\begin{eqnarray*}
		\|T_{b,\alpha}f\|_{L^{\vec{p}}(B)}&\lesssim& \|f\|_{L^{\vec{q}}(2B)}
		+r^{\sum_{i=1}^n\frac{1}{p_i}}\int_{2r}^\infty \l(1+\ln\frac{t}{r}\r)t^{-1-\sum_{i=1}^n\frac{1}{p_i}}
		\|f\|_{L^{\vec{q}}(B(x_0,t))}dt\\
		&\lesssim&r^{\sum_{i=1}^n\frac{1}{p_i}}\int_{2r}^\infty \l(1+\ln\frac{t}{r}\r)t^{-1-\sum_{i=1}^n\frac{1}{p_i}}
		\|f\|_{L^{\vec{q}}(B(x_0,t))}dt.
	\end{eqnarray*}
	We are done.
\end{proof}
Now we give the boundedness of $T_b$ and $T_{b,\alpha}$ on generalized mixed Morrey spaces.
\begin{theorem}\label{T-T3}
	Let $1<\vec{q}<\infty$, and $(\varphi_1,\varphi_2)$ satisfy the condition
	$$\int_r^\infty\l(1+\ln \frac{t}{r}\r)\frac{{\mathrm{ess}\inf}_{t<s<\infty}\varphi_1(x,s)s^{\sum_{i=1}^n\frac{1}{q_i}}}{t^{1+\sum_{i=1}^n\frac{1}{q_i}}}dt\lesssim \varphi_2(x,r).$$
	
	Suppose $b\in {\rm BMO}(\bR^n)$, and $T_b$ is a sublinear operator satisfying condition {\rm (\ref{T-3})}, and bounded on $L^{\vec{q}}(\bR^n)$.
	Then for $1<\vec{q}<\infty$, the operator $T_b$ is bounded form $M^{\varphi_1}_{\vec{q}}(\bR^n)$ to $M^{\varphi_2}_{\vec{q}}(\bR^n)$. Moreover,
	$$\|T_bf\|_{M^{\varphi_2}_{\vec{q}}}\lesssim\|b\|_{{\rm BMO}}\|f\|_{M^{\varphi_1}_{\vec{q}}}.$$ 
\end{theorem}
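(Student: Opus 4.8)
The plan is to reproduce the two-step scheme already used for Theorems \ref{T-T1} and \ref{T-T2}, replacing each ingredient by its logarithmically weighted counterpart: the local estimate of Lemma \ref{L-T3} takes the place of Lemma \ref{L-T1}, and the Hardy-type bound of Lemma \ref{L-Hardy-1} (for the operator $H^*_w$) takes the place of Lemma \ref{L-Hardy}. The only structural change from the proof of Theorem \ref{T-T1} is that the commutator produces the extra factor $1+\ln(t/r)$, and it is precisely this factor that forces the use of $H^*_w$ rather than $H_w$, matching the $1+\ln(t/r)$ appearing in the hypothesis on $(\varphi_1,\varphi_2)$.

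First I would unwind the target norm. Since $\|\chi_{B(x,r)}\|_{L^{\vec q}}\sim r^{\sum_{i=1}^n\frac{1}{q_i}}$, Definition \ref{GMM} gives
$$\|T_bf\|_{M^{\varphi_2}_{\vec q}}\sim \sup_{x\in\bR^n,\,r>0}\varphi_2(x,r)^{-1}r^{-\sum_{i=1}^n\frac{1}{q_i}}\|T_bf\|_{L^{\vec q}(B(x,r))}.$$
Substituting the bound of Lemma \ref{L-T3} (keeping the factor $\|b\|_{{\rm BMO}}$, which is explicit in its proof) cancels the prefactor $r^{\sum_{i=1}^n 1/q_i}$ and leaves, for each fixed $x$,
$$\|T_bf\|_{M^{\varphi_2}_{\vec q}}\lesssim \|b\|_{{\rm BMO}}\sup_{x\in\bR^n,\,r>0}\varphi_2(x,r)^{-1}\int_{r}^\infty\l(1+\ln\frac{t}{r}\r)t^{-1-\sum_{i=1}^n\frac{1}{q_i}}\|f\|_{L^{\vec q}(B(x,t))}\,dt,$$
where enlarging the lower limit from $2r$ to $r$ only increases the right-hand side.

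Next I would read the inner integral as $H^*_wg(r)$ with $w(t)=t^{-1-\sum_{i=1}^n 1/q_i}$ and $g(t)=\|f\|_{L^{\vec q}(B(x,t))}$, and apply Lemma \ref{L-Hardy-1} for each fixed $x$ with $v_2(r)=\varphi_2(x,r)^{-1}$ and $v_1(r)=\varphi_1(x,r)^{-1}r^{-\sum_{i=1}^n 1/q_i}$. The right-hand side of that lemma is then $\sup_{r>0}\varphi_1(x,r)^{-1}r^{-\sum_{i=1}^n 1/q_i}\|f\|_{L^{\vec q}(B(x,r))}$, whose supremum over $x$ is comparable to $\|f\|_{M^{\varphi_1}_{\vec q}}$; taking that supremum yields the claimed inequality, with the factor $\|b\|_{{\rm BMO}}$ carried through.

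The step that requires care, and the main obstacle, is checking that the assumed inequality on $(\varphi_1,\varphi_2)$ is exactly the finiteness condition $A<\infty$ of Lemma \ref{L-Hardy-1}. With the above choice of $v_1$ one uses the reciprocal identity
$$\frac{1}{\mathrm{ess}\sup_{s<\tau<\infty}v_1(\tau)}=\mathrm{ess}\inf_{s<\tau<\infty}\varphi_1(x,\tau)\tau^{\sum_{i=1}^n\frac{1}{q_i}},$$
so that the constant $A=\sup_{r>0}v_2(r)\int_r^\infty\l(1+\ln\frac{s}{r}\r)\frac{w(s)\,ds}{\mathrm{ess}\sup_{s<\tau<\infty}v_1(\tau)}$ of Lemma \ref{L-Hardy-1} reduces verbatim to the hypothesis. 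Once this bookkeeping, together with the monotone replacement of $2r$ by $r$, is in place, the conclusion follows immediately by combining Lemmas \ref{L-T3} and \ref{L-Hardy-1}.
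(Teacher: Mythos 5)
Your proposal is correct and follows essentially the same route as the paper: the paper's own proof of Theorem \ref{T-T3} simply says it "follows by Lemma \ref{L-T3} and Lemma \ref{L-Hardy-1} in the same manner as in the proof of Theorem \ref{T-T1}," which is precisely the two-step scheme you carry out, and your verification that the hypothesis on $(\varphi_1,\varphi_2)$ coincides with the constant $A$ of Lemma \ref{L-Hardy-1} is the correct bookkeeping. You even use the right weight $v_1(r)=\varphi_1(x,r)^{-1}r^{-\sum_{i=1}^n 1/q_i}$, silently fixing a typo in the paper's proof of Theorem \ref{T-T1}, where the exponent $-1$ on $\varphi_1$ is missing.
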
 
\begin{proof}
	The proof of Theorem \ref{T-T3} follows by Lemma \ref{L-T3} and Lemma
	\ref{L-Hardy-1} in the same manner as in the proof of Theorem \ref{T-T1}.
\end{proof}
From Theorem \ref{T-T3}, on can recover the result of Guliyev et al. [\cite{guliyev2011boundedness}, Theorem 6.6] by taking $\vec{q}=q$.

Similarly, for the sublinear operator $T_{b,\alpha}$ with size condition (\ref{T-4}), we have the following theorem. 
\begin{theorem}\label{T-T4}
	Let $1<\vec{q}<\infty$, $0<\alpha<n/\vec{q}$, $\frac{1}{\vec{p}}=\frac{1}{\vec{q}}-\frac{\alpha}{n}$, and $(\varphi_1,\varphi_2)$ satisfy the condition
	$$\int_r^\infty\l(1+\ln \frac{t}{r}\r)\frac{{\mathrm{ess}\inf}_{t<s<\infty}\varphi_1(x,s)s^{\sum_{i=1}^n\frac{1}{q_i}}}{t^{1+\sum_{i=1}^n\frac{1}{p_i}}}dt\lesssim \varphi_2(x,r).$$
	Suppose $b\in {\rm BMO}(\bR^n)$, and $T_{b,\alpha}$ is a sublinear operator satisfying condition {\rm (\ref{T-4})}, and bounded from $L^{\vec{q}}(\bR^n)$ to $L^{\vec{p}}(\bR^n)$.
	Then for $1<\vec{q}<\infty$, the operator $T_{b,\alpha}$ is bounded from $M^{\varphi_1}_{\vec{q}}(\bR^n)$ to $M^{\varphi_2}_{\vec{p}}(\bR^n)$. Moreover,
	$$\|T_{b,\alpha}f\|_{M^{\varphi_2}_{\vec{p}}}\lesssim \|b\|_{{\rm BMO}}\|f\|_{M^{\varphi_1}_{\vec{q}}}.$$ 
\end{theorem}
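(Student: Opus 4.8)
The plan is to follow the template of Theorems \ref{T-T1}--\ref{T-T3} almost verbatim, the only change being that the ordinary weighted Hardy operator is replaced by its logarithmic counterpart. I would first reduce the global $M^{\varphi_2}_{\vec{p}}$ estimate to the local bound furnished by Lemma \ref{L-T4}, and then absorb the remaining integral operator by means of Lemma \ref{L-Hardy-1}.

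Concretely, I would start from Definition \ref{GMM} together with the normalization $\|\chi_{B(x,r)}\|_{L^{\vec{p}}}\sim r^{\sum_{i=1}^n 1/p_i}$, so that
\begin{equation*}
\|T_{b,\alpha}f\|_{M^{\varphi_2}_{\vec{p}}}\sim\sup_{x\in\bR^n,\,r>0}\varphi_2(x,r)^{-1}r^{-\sum_{i=1}^n\frac{1}{p_i}}\|T_{b,\alpha}f\|_{L^{\vec{p}}(B(x,r))}.
\end{equation*}
Inserting the estimate of Lemma \ref{L-T4}, whose proof produces a factor $\|b\|_{{\rm BMO}}$, cancels the prefactor $r^{\sum_{i=1}^n 1/p_i}$ and leaves
\begin{equation*}
\|T_{b,\alpha}f\|_{M^{\varphi_2}_{\vec{p}}}\lesssim\|b\|_{{\rm BMO}}\sup_{x\in\bR^n,\,r>0}\varphi_2(x,r)^{-1}\int_{2r}^\infty\l(1+\ln\frac{t}{r}\r)t^{-1-\sum_{i=1}^n\frac{1}{p_i}}\|f\|_{L^{\vec{q}}(B(x,t))}\,dt.
\end{equation*}
Freezing $x$, the inner integral is dominated by $H^*_w g(r)$ with $g(t)=\|f\|_{L^{\vec{q}}(B(x,t))}$ and $w(t)=t^{-1-\sum_{i=1}^n 1/p_i}$, so I would invoke Lemma \ref{L-Hardy-1} with $v_2(r)=\varphi_2(x,r)^{-1}$ and $v_1(r)=\varphi_1(x,r)^{-1}r^{-\sum_{i=1}^n 1/q_i}$. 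Then $\mathrm{ess}\sup_{r>0}v_1(r)g(r)$, after taking the supremum over $x$, is comparable to $\|f\|_{M^{\varphi_1}_{\vec{q}}}$, while the criterion $A<\infty$ of Lemma \ref{L-Hardy-1} becomes exactly the hypothesis imposed on $(\varphi_1,\varphi_2)$ once one uses the reciprocal identity $\big(\mathrm{ess}\sup_{t<\tau<\infty}\varphi_1(x,\tau)^{-1}\tau^{-\sum_{i=1}^n 1/q_i}\big)^{-1}=\mathrm{ess}\inf_{t<\tau<\infty}\varphi_1(x,\tau)\tau^{\sum_{i=1}^n 1/q_i}$. Chaining the two displays yields $\|T_{b,\alpha}f\|_{M^{\varphi_2}_{\vec{p}}}\lesssim\|b\|_{{\rm BMO}}\|f\|_{M^{\varphi_1}_{\vec{q}}}$.

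I do not expect a genuine obstacle here. The essential difficulty, namely producing the logarithmic weight $1+\ln(t/r)$ out of the oscillation bound $|b_{B(x_0,r)}-b_{B(x_0,t)}|\lesssim\|b\|_{{\rm BMO}}\ln(t/r)$ of (\ref{ln}), has already been carried out inside Lemma \ref{L-T4}, exactly as Lemma \ref{L-T2} prepared Theorem \ref{T-T2}. The only points demanding attention are the reciprocal identity converting the essential supremum of $v_1$ into the essential infimum appearing in the statement, and the mixed-exponent bookkeeping: the Morrey normalization and the Hardy weight $w$ carry the $\vec{p}$-exponent, which is where the relation $1/\vec{p}=1/\vec{q}-\alpha/n$ enters (via Lemma \ref{L-T4}), whereas the factor surviving inside the essential infimum carries the $\vec{q}$-exponent. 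Both are purely computational, so the theorem is a routine matching of parameters against Lemma \ref{L-Hardy-1}.
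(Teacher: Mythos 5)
Your proposal is exactly the paper's proof: the paper derives Theorem \ref{T-T4} by combining Lemma \ref{L-T4} with Lemma \ref{L-Hardy-1}, with the same choices of $v_1$, $v_2$, $g$, $w$ and the same reciprocal identity turning the essential supremum into the essential infimum of the hypothesis, just as in the proof of Theorem \ref{T-T2}. Your version $v_1(r)=\varphi_1(x,r)^{-1}r^{-\sum_{i=1}^n 1/q_i}$ is in fact the correct reading (the paper's proofs of Theorems \ref{T-T1} and \ref{T-T2} omit the inverse on $\varphi_1$, an evident typo), so nothing further is needed.
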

\begin{proof}
	The statement of Theorem \ref{T-T4} follows by Lemma \ref{L-T4} and Lemma
	\ref{L-Hardy-1} in the same manner as in the proof of Theorem \ref{T-T2}.
\end{proof}
By taking $\vec{q}=q$ in Theorem \ref{T-T4}, we recover the result of Guliyev et al. [\cite{guliyev2011boundedness}, Theorem 7.4], which proved the boundedness of $T_{b,\alpha}$ on the generalized Morrey spaces.

\section{Some applications}
This section gives some applications of our main theorems. We will show that many important integral operators and commutators appearing in harmonic analysis satisfy the assumptions mentioned above. Therefore, we can obtain the boundedness of various operators on generalized mixed Morrey spaces by using our main results.

As stated in the introduction, the Hardy-Littlewood maximal operator $M$ and the Calder{\'o}n-Zygmund singular integral operator $K$
are all sublinear operators satisfying the condition (\ref{T-1}). One can also see that $[b,M]$ and $[b,K]$  are also subilinear operators and satisfiy the condition (\ref{T-3}).
Note that the fractional integral operator $I_\alpha$ is a linear operator satisfying the condition (\ref{T-2}), and the commutator $[b,I_\alpha]$ is also a linear operator satisfying (\ref{T-4}).
So in order to apply our main theorems to the mentioned operators, we need to show the boundedness of $M, K, [b,M], [b,K]$ on $L^{\vec{q}}(\bR^n)$, and the boundedness of $I_\alpha, [b,I_\alpha]$ from $L^{\vec{q}}(\bR^n)$ to $L^{\vec{p}}(\bR^n)$. 

As we know, the  Hardy-Littlewood maximal operator $M$ is bounded on $L^{\vec{q}}(\bR^n)$, $1<\vec{q}<\infty$ (see [\cite{nogayama2019mixed}]), but there is no complete boundedness results for some other operators on mixed Lebesgue spaces. To prove the boundedness of some important operators on mixed Lebesgue spaces in a uniform way, we will give the extrapolation theorems on mixed Lebesgue spaces, which have their own interest. 

The extrapolaton theory on mixed Lebesgue spaces relies on the classical $A_p$ weight (see [\cite{grafakos2008classical}]). 
\begin{definition}
	For $1<p<\infty$, a non-negative function $w\in L^1_{\rm loc}(\bR^n)$ is said to be an $A_p$ weight if
	\begin{eqnarray*}
		[w]_{A_p}=\sup_{B\in\mathbb{B}}\l(\frac{1}{|B|}\int_Bw(x)dx\r)\l(\frac{1}{|B|}\int_Bw(x)^{-\frac{p'}{p}}dx\r)^{\frac{p}{p'}}<\infty.
	\end{eqnarray*}
	A non-negative local integrable function $w$ is said to be an $A_1$ weight if
	$$\frac{1}{|B|}\int_Bw(y)dy\leq Cw(x),~~~ a.e. x\in B$$
	for some constant $C>0$. The infimum of all such $C$ is denoted by $[w]_{A_1}$. We denote $A_\infty$ by the union of all $A_p~(1\leq p<\infty)$ functions.
\end{definition}

We also need the boundedness of $M$ on mixed norm spaces $L^{\vec{q}}(\bR^n)$, see [\cite{nogayama2019mixed}].
\begin{lemma}\label{l0-A}
	For $1<\vec{q}<\infty$, there holds
	\begin{equation}\label{0-A}
	\|Mf\|_{L^{\vec{q}}}\lesssim\|f\|_{L^{\vec{q}}}.
	\end{equation}
\end{lemma}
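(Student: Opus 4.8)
The plan is to prove the mixed-norm maximal inequality (Lemma \ref{l0-A}) by reducing the ball maximal operator $M$ to a composition of \emph{one-dimensional} maximal operators, and then lifting the classical one-dimensional bound to the mixed norm one variable at a time. For $i=1,\dots,n$, let $M_i$ denote the Hardy--Littlewood maximal operator acting only on the $i$-th variable $x_i$ (with the remaining variables frozen). The classical theorem gives $\|M_i g\|_{L^{q_i}(\bR)}\lesssim\|g\|_{L^{q_i}(\bR)}$ in the $x_i$-variable for each $1<q_i<\infty$.

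First I would establish the pointwise domination $Mf(x)\lesssim (M_1M_2\cdots M_n)f(x)$. Since balls and cubes are comparable, $Mf$ is bounded by the cube maximal operator; writing the average over a cube $Q=\prod_i[a_i,b_i]\ni x$ as an iterated integral and estimating the innermost integral in $y_n$ by $M_nf$ evaluated at $x_n$, then the next by $M_{n-1}$, and so on, yields the claimed pointwise bound with a dimensional constant.

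It then suffices to show that each $M_i$ is bounded on $L^{\vec{q}}(\bR^n)$, after which composing the $n$ estimates finishes the proof. For the innermost direction this is immediate: the bound $\|M_1f(\cdot,x_2,\dots,x_n)\|_{L^{q_1}_{x_1}}\lesssim\|f(\cdot,x_2,\dots,x_n)\|_{L^{q_1}_{x_1}}$ holds pointwise in the remaining variables, so taking the successive outer norms preserves it. The genuine difficulty is the interior (and outer) directions, where the $L^{q_i}$-norm in $x_i$ is sandwiched between norms in the other variables and $M_i$ does not commute with those norms.

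The hard part will therefore be handling $M_i$ for $i>1$, which I would resolve through the \emph{Fefferman--Stein vector-valued maximal inequality}: for $1<p,s<\infty$ one has $\big\|\,\|(M g_j)_j\|_{\ell^s}\,\big\|_{L^p}\lesssim\big\|\,\|(g_j)_j\|_{\ell^s}\,\big\|_{L^p}$, together with its continuous / Banach-lattice analogue in which $\ell^s$ is replaced by a mixed-norm space $L^{(q_1,\dots,q_{i-1})}$ over the inner variables. Applying this inequality in the $x_i$-variable, with the vector index running over the remaining coordinates, transfers the one-dimensional bound across the other norms and yields $\|M_if\|_{L^{\vec{q}}}\lesssim\|f\|_{L^{\vec{q}}}$ for every $1<\vec{q}<\infty$. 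Iterating over $i=1,\dots,n$ and invoking the pointwise domination above completes the argument. An alternative route is to note that $M$ is bounded on every weighted space $L^{q_i}(w)$ with $w\in A_{q_i}$ and to deduce the mixed-norm bound from a Rubio de Francia--type extrapolation on mixed Lebesgue spaces; however, since the present paper develops such extrapolation afterwards, I would keep the two arguments logically separate to avoid circularity.
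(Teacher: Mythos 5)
The paper does not actually prove this lemma: it is quoted as a known result, with the reader referred to Nogayama's paper on mixed Morrey spaces [\cite{nogayama2019mixed}]. So any honest proof you give is automatically "a different route," and yours is the standard one from the literature: dominate the ball maximal operator by the strong (cube) maximal operator, factor the latter pointwise as $Mf\lesssim M_1M_2\cdots M_nf$ with $M_i$ the one-dimensional maximal operator in $x_i$, and then prove each $M_i$ is bounded on $L^{\vec{q}}(\bR^n)$ so that the $n$ bounds compose. The pointwise domination step and the treatment of $M_1$ (apply the scalar theorem in $x_1$ for frozen remaining variables, then use monotonicity of the outer norms) are correct as written, and you correctly identify where the real work lies: for $i>1$ the maximal operator sits between inner and outer norms, and one needs a vector-valued maximal inequality in which the "vector" index runs over the inner variables.

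The one point you should tighten is precisely that step. The Fefferman--Stein inequality with $\ell^s$ replaced by a \emph{mixed-norm lattice} $L^{(q_1,\dots,q_{i-1})}$ is not a formal consequence of the classical discrete statement; it is a genuine theorem in its own right. It is true, and citable: for $i=2$ it is the continuous-parameter Fefferman--Stein inequality (vector index running over a $\sigma$-finite measure space, proved exactly like the discrete case via the weighted bound $\int (Mg)^s\,w\lesssim\int |g|^s\,Mw$), and for general $i$ it follows either by iterating that argument or from the Bourgain--Rubio de Francia maximal theorem for UMD Banach lattices, of which $L^{\vec{q}}$ with $1<\vec{q}<\infty$ is an example; boundedness of the strong maximal operator on mixed-norm spaces along these lines also appears in the literature the paper itself cites (e.g.\ Ho's work on the strong maximal operator). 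As your proposal stands, this analogue is asserted rather than justified, so you should either supply its (inductive) proof or an explicit reference. Finally, your remark about avoiding the extrapolation route is well taken: the paper's extrapolation theorems (Theorems \ref{D-ext} and \ref{main-2}) use Lemma \ref{l0-A} in their proofs, so deducing the lemma from extrapolation would indeed be circular within this paper's logical structure.
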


By $\mathfrak{F}$, we mean a family of pair $(f,g)$ of non-negative measurable functions that are not identical to zero. For such a family $\mathfrak{F}$, $p>0$ and a weight $w\in A_q$, the expression
$$\int_{\bR^n}f(x)^{p}w(x)dx\lesssim \int_{\bR^n}g(x)^{p}w(x)dx,~~~(f,g)\in\mathfrak{F}$$
means that this inequality holds for all pair $(f,g)\in\mathfrak{F}$ if the left hand side is finite, and the implicated constant depends only on $p$ and $[w]_{A_q}$.

Now we give the extrapolation theorems on mixed Lebesgue spaces. The first one is the diagonal extrapolation theorem.
%which can be seen as a corollary of . 
%In fact, the same author gave the diagonal extrapolation theorem on the mixed Morrey space, which contains the mixed Lebesgue space as a special case. 
\begin{theorem}\label{D-ext}
	Let $0<q_0<\infty$ and $\vec{q}=(q_1,\cdots,q_n)\in(0,\infty)^n$. Let $f,g\in\mathcal{M}(\bR^n)$. Suppose for every $w\in A_1$, we have
	\begin{equation}\label{3-a}
	\int_{\bR^n}f(x)^{q_0}w(x)dx\lesssim \int_{\bR^n}g(x)^{q_0}w(x)dx,~~~(f,g)\in\mathfrak{F}.
	\end{equation}
	Then if $\vec{q}>q_0$, we have
	\begin{equation}\label{3-b}
	\|f\|_{L^{\vec{q}}}\lesssim \|g\|_{L^{\vec{q}}},~~~(f,g)\in\mathfrak{F}.
	\end{equation}
\end{theorem}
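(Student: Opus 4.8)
The plan is to run a Rubio de Francia extrapolation argument adapted to the iterated structure of the mixed norm. First I would absorb the power $q_0$ into the exponent: setting $\vec{s}=\vec{q}/q_0=(q_1/q_0,\dots,q_n/q_0)$, the hypothesis $\vec q>q_0$ forces $\vec s\in(1,\infty)^n$, and the definition of the mixed norm gives the identities $\|f\|_{L^{\vec q}}^{q_0}=\|f^{q_0}\|_{L^{\vec s}}$ and $\|g\|_{L^{\vec q}}^{q_0}=\|g^{q_0}\|_{L^{\vec s}}$. Thus \eqref{3-b} is equivalent to the unweighted estimate $\|f^{q_0}\|_{L^{\vec s}}\lesssim\|g^{q_0}\|_{L^{\vec s}}$ on a space all of whose exponents lie strictly between $1$ and $\infty$. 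Since $1<\vec s<\infty$, the associate space of $L^{\vec s}$ is $L^{\vec s'}$ with $\vec s'\in(1,\infty)^n$, and by the duality of mixed Lebesgue spaces [\cite{1961The}] (and since $f^{q_0}\ge0$),
$$\|f^{q_0}\|_{L^{\vec s}}=\sup\Big\{\int_{\bR^n}f(x)^{q_0}H(x)\,dx:\ H\ge0,\ \|H\|_{L^{\vec s'}}\le1\Big\},$$
so it suffices to bound $\int_{\bR^n}f^{q_0}H\,dx$ for each such test function $H$.

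The core step is to replace the arbitrary $H$ by an $A_1$ majorant, so that the weighted hypothesis \eqref{3-a} becomes applicable. Because $\vec s'\in(1,\infty)^n$, Lemma \ref{l0-A} guarantees that $M$ is bounded on $L^{\vec s'}$; writing $\|M\|$ for its operator norm there, I would set
$$RH=\sum_{k=0}^\infty\frac{M^kH}{2^k\|M\|^k},$$
with $M^0H=H$ and $M^k$ the $k$-fold iterate. This construction has the three standard properties: $H\le RH$ pointwise; $\|RH\|_{L^{\vec s'}}\le 2\|H\|_{L^{\vec s'}}\le2$ from the subadditivity of the norm and $\|M^kH\|_{L^{\vec s'}}\le\|M\|^k\|H\|_{L^{\vec s'}}$; and $M(RH)\le2\|M\|\,RH$, whence $RH\in A_1$ with $[RH]_{A_1}\le 2\|M\|$, a bound independent of $H$.

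I would then chain the estimates, using $H\le RH$, the hypothesis \eqref{3-a} with weight $w=RH\in A_1$ (whose implicit constant depends only on $q_0$ and $[RH]_{A_1}\le2\|M\|$), and finally Hölder's inequality on mixed Lebesgue spaces together with $\|RH\|_{L^{\vec s'}}\le2$:
$$\int_{\bR^n}f^{q_0}H\,dx\le\int_{\bR^n}f^{q_0}(RH)\,dx\lesssim\int_{\bR^n}g^{q_0}(RH)\,dx\le\|g^{q_0}\|_{L^{\vec s}}\,\|RH\|_{L^{\vec s'}}\le2\|g^{q_0}\|_{L^{\vec s}}.$$
Taking the supremum over all admissible $H$ yields $\|f^{q_0}\|_{L^{\vec s}}\lesssim\|g^{q_0}\|_{L^{\vec s}}$, which is precisely \eqref{3-b}.

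The main obstacle I anticipate is a point of rigor rather than a computation: to invoke \eqref{3-a} I must know its left-hand side $\int_{\bR^n}f^{q_0}(RH)\,dx$ is finite, since the hypothesis is only assumed to hold in that case. I would handle this by first imposing the a priori assumption $\|f\|_{L^{\vec q}}<\infty$ (the conclusion being trivial when $\|g\|_{L^{\vec q}}=\infty$), which via Hölder gives $\int_{\bR^n}f^{q_0}(RH)\,dx\le\|f^{q_0}\|_{L^{\vec s}}\|RH\|_{L^{\vec s'}}\le2\|f\|_{L^{\vec q}}^{q_0}<\infty$, and then removing this assumption by a standard truncation of the pairs in $\mathfrak{F}$ followed by monotone convergence. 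Everything else — the mixed-norm Hölder inequality and duality [\cite{1961The}] and the boundedness of $M$ on $L^{\vec s'}$ from Lemma \ref{l0-A} — is available off the shelf, so the real work lies in assembling these ingredients and in the finiteness and truncation bookkeeping.
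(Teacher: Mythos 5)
Your proposal is correct and follows essentially the same route as the paper: the paper proves the off-diagonal version (Theorem \ref{main-2}) by the identical Rubio de Francia scheme --- rescaling by $q_0$, dualizing the mixed norm via [\cite{1961The}], building the majorant $\mathfrak{R}h=\sum_k M^kh/(2^kB^k)$ with $A_1$ constant controlled by the $L^{\bar{\vec q}'}$ bound for $M$ from Lemma \ref{l0-A}, and applying the weighted hypothesis with $w=\mathfrak{R}h$ --- and then declares the diagonal case (the statement at hand) to follow by similarity, which is exactly the argument you wrote out. Your treatment of the finiteness of $\int f^{q_0}\mathfrak{R}h\,dx$ (a priori assumption plus truncation) is, if anything, slightly more careful than the paper's, which disposes of it with the same H\"older estimate but without the truncation remark.
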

Our second main result in this section is the off-diagonal extrapolation on mixed Lebesgue spaces.
\begin{theorem}\label{main-2}
	Let $f,g\in\mathcal{M}$. Suppose that for some $p_0$ and $q_0$ with $0< q_0<p_0<\infty$ and every $w\in A_1$,
	\begin{equation}\label{3-X}
	\l(\int_{\bR^n}f(x)^{p_0}w(x)dx\r)^{\frac{1}{p_0}}\lesssim \l(\int_{\bR^n}g(x)^{q_0}w(x)^{\frac{q_0}{p_0}}dx\r)^{\frac{1}{q_0}},~~(f,g)\in\mathfrak{F}.
	\end{equation}
	Then for all $q_0<\vec{q}<\frac{p_0q_0}{p_0-q_0}$, and $\vec{p}$ satisfies $1/\vec{q}-1/\vec{p}=1/q_0-1/p_0$,
	we have
	\begin{equation}\label{5-1-X}
	\|f\|_{L^{\vec{p}}}\lesssim \|g\|_{L^{\vec{q}}},~~~(f,g)\in\mathfrak{F}.
	\end{equation}
\end{theorem}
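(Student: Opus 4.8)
The plan is to run the Rubio de Francia algorithm directly in the mixed-norm setting, converting the off-diagonal weighted hypothesis (\ref{3-X}) into the unweighted bound (\ref{5-1-X}) by mixed-norm duality and H\"older's inequality, without passing through Theorem \ref{D-ext}. First I would record the exponent arithmetic. Write $\delta:=1/q_0-1/p_0>0$, so that the upper endpoint is $p_0q_0/(p_0-q_0)=1/\delta$. Since $\vec q>q_0$ we have $q_i>q_0$, hence $1/p_i=1/q_i-\delta<1/p_0$, i.e. $\vec p>p_0$; and since $\vec q<1/\delta$ we get $1/p_i=1/q_i-\delta>0$, i.e. $\vec p<\infty$. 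Thus $\vec p/p_0\in(1,\infty)^n$ and $\vec q/q_0\in(1,\infty)^n$, which is exactly what legitimizes the duality and H\"older steps below.

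Next, using the homogeneity $\bigl\||F|^{s}\bigr\|_{L^{\vec r}}=\|F\|_{L^{s\vec r}}^{s}$ and the duality $(L^{\vec r})^{\ast}=L^{\vec r'}$ for mixed Lebesgue spaces (Benedek--Panzone), I would write
\[
\|f\|_{L^{\vec p}}^{p_0}=\|f^{p_0}\|_{L^{\vec p/p_0}}=\sup\Big\{\int_{\bR^n}f(x)^{p_0}h(x)\,dx:\ h\ge0,\ \|h\|_{L^{(\vec p/p_0)'}}\le 1\Big\}.
\]
Fix such an $h$. Because $(\vec p/p_0)'\in(1,\infty)^n$, Lemma \ref{l0-A} gives that $M$ is bounded on $L^{(\vec p/p_0)'}$, so the Rubio de Francia operator $Rh=\sum_{k\ge0}(2\|M\|)^{-k}M^{k}h$ produces a weight $w:=Rh$ satisfying $h\le w$, $\|w\|_{L^{(\vec p/p_0)'}}\le 2$, and $w\in A_1$ with $[w]_{A_1}\le 2\|M\|$ bounded independently of $h$. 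The $A_1$ property of $Rh$ is a pointwise consequence of the subadditivity of $M$ and does not see the mixed structure, while the norm control uses precisely Lemma \ref{l0-A}.

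Then I would insert this weight into the hypothesis. Since $h\le w$ and $w\in A_1$, raising (\ref{3-X}) to the power $p_0$ gives $\int f^{p_0}h\le\int f^{p_0}w\lesssim\bigl(\int g^{q_0}w^{q_0/p_0}\bigr)^{p_0/q_0}$, with implied constant depending only on $[w]_{A_1}\lesssim 1$. Applying the mixed H\"older inequality with the exponents $(\vec q/q_0,(\vec q/q_0)')$ yields $\int g^{q_0}w^{q_0/p_0}\le\|g\|_{L^{\vec q}}^{q_0}\,\|w\|_{L^{(q_0/p_0)(\vec q/q_0)'}}^{q_0/p_0}$. The crucial point is that, componentwise, $(q_0/p_0)(q_i/q_0)'=(p_i/p_0)'$ is equivalent to $1/q_i-1/p_i=1/q_0-1/p_0$, which is exactly the relation imposed on $(\vec p,\vec q)$; hence $\|w\|_{L^{(q_0/p_0)(\vec q/q_0)'}}=\|w\|_{L^{(\vec p/p_0)'}}\le 2$. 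Collecting the estimates, $\int f^{p_0}h\lesssim\|g\|_{L^{\vec q}}^{p_0}$ uniformly in $h$, and taking the supremum over $h$ produces (\ref{5-1-X}).

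The main obstacle is the bookkeeping in the last step: one must verify that the specific weight power $q_0/p_0$ appearing in (\ref{3-X}) is exactly engineered so that, after H\"older, the weight factor lands in the dual space $L^{(\vec p/p_0)'}$ on which the Rubio de Francia weight was built. This is the content of the exponent identity, and it is precisely what forces the admissible range $q_0<\vec q<p_0q_0/(p_0-q_0)$ (equivalently $\vec p\in(p_0,\infty)^n$). A secondary, routine matter is justifying the duality supremum when $\|f\|_{L^{\vec p}}=\infty$, which I would handle as usual by first truncating $f$ and invoking the convention that (\ref{3-X}) is read only when its left-hand side is finite.
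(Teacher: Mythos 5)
Your proposal is correct and follows essentially the same route as the paper's proof: mixed-norm duality to reduce to $\sup_h \int f^{p_0}h$, the Rubio de Francia algorithm built from the boundedness of $M$ on $L^{(\vec p/p_0)'}$ (Lemma \ref{l0-A}), application of the weighted hypothesis with $w=\mathfrak{R}h$, and the exponent identity $q_0(\vec q/q_0)'=p_0(\vec p/p_0)'$ to return the weight factor to the unit ball of $L^{(\vec p/p_0)'}$. Your explicit truncation remark even treats the finiteness convention slightly more carefully than the paper, which verifies finiteness only under the implicit assumption $\|f\|_{L^{\vec p}}<\infty$.
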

\begin{proof}
	
	By the similarity, we only prove Theorem \ref{main-2}.
	
	%	Without loss of generality, one may assume $f$ is a non-negative function. 
	We use the Rubio de Francia iteration algorithm presented in [\cite{cruz2011weights}].
	
	Let $\bar{\vec{q}}=\vec{q}/q_0$ and $\bar{\vec{p}}=\vec{p}/p_0$. By the assumptions and Lemma \ref{l0-A}, the
	maximal operator is bounded on $L^{\bar{\vec{p}}'}(\bR^n)$, so there exists a positive constant $B$ such
	that
	$$\|Mf\|_{L^{\bar{\vec{p}}'}}\leq B \|f\|_{L^{\bar{\vec{p}}'}}.$$
	For any non-negative function $h$, define a new operator $\mathfrak{R}h$ by
	$$\mathfrak{R}h(x)=\sum_{k=0}^\infty\frac{M^kh(x)}{2^kB^k},$$
	where for $k\geq1$, $M^k$ denotes $k$ iterations of the maximal operator,
	and $M^0$ is the identity operator.
	
	The operator $\mathfrak{R}$ satisfies
	\begin{equation}\label{3-d}
	h(x)\leq\mathfrak{R}h(x),
	\end{equation}
	\begin{equation}\label{3-e}
	\|\mathfrak{R}h\|_{L^{\bar{\vec{p}}'}}\leq 2\|h\|_{L^{\bar{\vec{p}}'}},
	\end{equation}
	\begin{equation}\label{3-f}
	\|\mathfrak{R}h\|_{A_1}\leq 2B.
	\end{equation}
	The inequality (\ref{3-d}) is straight-forward.
	
	Since
	\begin{eqnarray*}
		M(\mathfrak{R}h)\leq\sum_{k=0}^\infty\frac{M^{k+1}h}{2^kB^k}
		\leq 2B
		\sum_{k=1}^\infty\frac{M^kh}{2^kB^k}
		\leq 2B\mathfrak{R}h,
	\end{eqnarray*}
	the properties (\ref{3-e})  and (\ref{3-f})  are consequences of Lemma \ref{l0-A} and the definition of $A_1$.
	
	Since the dual of $L^{\bar{\vec{p}}}(\bR^n)$ is $L^{\bar{\vec{p}}'}(\bR^n)$, we get
	\begin{eqnarray}\label{3-h}
	\|f\|_{L^{\vec{p}}}^{p_0}&=&\|f^{p_0}\|_{L^{\bar{\vec{p}}}}\\ \nonumber
	&\lesssim & \sup\l\{ \int_{\bR^n}|f(x)|^{p_0}h(x)dx:~\|h\|_{L^{\bar{\vec{p}}'}}\leq1, h\geq0 \r\}.
	\end{eqnarray}
	By using H{\"o}lder's inequality on mixed Lebesgue spaces and (\ref{3-d}), we have
	\begin{eqnarray}\label{3-0}
	\int_{\bR^n}f(x)^{p_0}h(x)dx&\lesssim& \int_{\bR^n}f(x)^{p_0}\mathfrak{R}h(x)dx \nonumber\\
	&\lesssim & \|f^{p_0}\|_{L^{\bar{\vec{p}}}}\|h\|_{L^{\bar{\vec{p}}'}}<\infty.
	\end{eqnarray}
	In view of (\ref{3-d}) and $\mathfrak{R}h\in A_1$, we use (\ref{3-X}) with $w=\mathfrak{R}h(x)$ to obtain
	$$\int_{\bR^n}f(x)^{p_0}h(x)dx\lesssim \int_{\bR^n}f(x)^{p_0}\mathfrak{R}h(x)dx\lesssim\l(\int_{\bR^n}g(x)^{q_0}[\mathfrak{R}h(x)]^{q_0/p_0}dx\r)^{p_0/q_0}.$$
	Combining (\ref{3-e}) with (\ref{3-0}) and using H{\"o}lder's inequality on mixed Lebesgue spaces again, we arrive at
	\begin{eqnarray}\label{3-i}
	\int_{\bR^n}f(x)^{p_0}h(x)dx&\lesssim & \|g^{q_0}\|^{p_0/q_0}_{L^{\bar{\vec{q}}}}\|(\mathfrak Rh)^{q_0/p_0}\|^{p_0/q_0}_{L^{\bar{\vec{q}}'}}\\\nonumber
	&\sim &\|g\|^{p_0}_{L^{\vec{q}}}\|(\mathfrak Rh)^{q_0/p_0}\|^{p_0/q_0}_{L^{\bar{\vec{q}}'}}.
	\end{eqnarray}
	A direct calculation yields $q_0\bar{\vec{q}}'=p_0\bar{\vec{p}}'$. Therefore
	\begin{equation}\label{NN}
	\|(\mathfrak Rh)^{q_0/p_0}\|^{p_0/q_0}_{L^{\bar{\vec{q}}'}}=\|\mathfrak Rh\|_{L^{\bar{\vec{p}}'}}\lesssim\|h\|_{L^{\bar{\vec{p}}'}}.
	\end{equation}
	By taking the supremum over all $h\in L^{\bar{\vec{p}}'}(\bR^n)$ with $\|h\|_{L^{\bar{\vec{p}}'}}\leq1$ and $h\geq 0$, (\ref{3-h}), (\ref{3-i}) and (\ref{NN}) give us the desired conclusion (\ref{5-1-X}).		
\end{proof}
We point out that when $n=2$, there are other versions of the diagonal extrapolation theorem [\cite{ho2016strong}] and the off-diagonal extrapolation theorem [\cite{tan2020off}] on mixed Lebesgue spaces, which are different from Theorem \ref{D-ext} and Theorem \ref{main-2}. 

By the density of smooth functions with compact support $C^\infty_c(\bR^n)$ in the mixed Lebesgue space $L^{\vec{q}}(\bR^n)$, $1<\vec{q}<\infty$ (see [\cite{1961The}]), one can apply Theorem \ref{D-ext} and Theorem \ref{main-2} to the mapping property of some sublinear operators. 
\begin{theorem}\label{subli}
	Suppose $0<q_0<\vec{q}<\infty$ and 
	$T$ is a sublinear operator such that for every $w\in A_1$,
	\begin{equation*}
	\int_{\bR^n}|Tf(z)|^{q_0}w(z)dz\lesssim \int_{\bR^n}|f(z)|^{q_0}w(z)dz,~~f\in C^\infty_c(\bR^n).
	\end{equation*}
	Then $T$ can be extended to a bounded operator on $L^{\vec{q}}(\bR^n)$.
\end{theorem}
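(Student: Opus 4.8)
The plan is to feed the hypothesis into the diagonal extrapolation theorem (Theorem \ref{D-ext}) to obtain the norm inequality on the dense class $C^\infty_c(\bR^n)$, and then to extend $T$ to all of $L^{\vec q}(\bR^n)$ by a continuity argument that uses the sublinearity of $T$.

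First I would choose the family
$$\mathfrak{F}=\left\{(|Tf|,|f|):f\in C^\infty_c(\bR^n)\right\}.$$
The assumed weighted estimate is exactly condition (\ref{3-a}) for $\mathfrak{F}$, the first entry of each pair being $|Tf|$ and the second $|f|$. Since $0<q_0<\vec q<\infty$ forces $\vec q>q_0$, Theorem \ref{D-ext} applies verbatim and returns (\ref{3-b}), namely
$$\|Tf\|_{L^{\vec q}}=\big\||Tf|\big\|_{L^{\vec q}}\lesssim\big\||f|\big\|_{L^{\vec q}}=\|f\|_{L^{\vec q}},\qquad f\in C^\infty_c(\bR^n).$$
This is the desired a priori bound on the dense subspace.

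Next I would extend $T$ by density. Given $f\in L^{\vec q}(\bR^n)$, the density of $C^\infty_c(\bR^n)$ in $L^{\vec q}(\bR^n)$ (see [\cite{1961The}]) provides $f_k\in C^\infty_c(\bR^n)$ with $f_k\to f$ in $L^{\vec q}(\bR^n)$. Sublinearity of $T$ yields the pointwise inequality $\big||Tf_k|-|Tf_j|\big|\le |T(f_k-f_j)|$, so by the a priori bound
$$\big\||Tf_k|-|Tf_j|\big\|_{L^{\vec q}}\le\|T(f_k-f_j)\|_{L^{\vec q}}\lesssim\|f_k-f_j\|_{L^{\vec q}}\longrightarrow0.$$
Hence $\{|Tf_k|\}$ is Cauchy in $L^{\vec q}(\bR^n)$ and converges to some $F\in L^{\vec q}(\bR^n)$; this limit is independent of the approximating sequence by the same inequality, so it defines the bounded extension $\overline T$, and passing to the limit in $\|Tf_k\|_{L^{\vec q}}\lesssim\|f_k\|_{L^{\vec q}}$ gives $\|\overline Tf\|_{L^{\vec q}}\lesssim\|f\|_{L^{\vec q}}$.

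I expect the main obstacle to be the extension step rather than the extrapolation, which is a pure substitution. Because $T$ is only sublinear, $Tf_k-Tf_j$ is not $T(f_k-f_j)$ and the textbook extension for bounded linear maps does not apply directly; the point that repairs this is the elementary sublinearity inequality $\big||Ta|-|Tb|\big|\le|T(a-b)|$, which transfers the Cauchy property from $\{f_k\}$ to $\{|Tf_k|\}$ and lets one identify the extension (and, on those $f$ where $T$ is already defined pointwise, check that it agrees with $Tf$). All remaining verifications are routine.
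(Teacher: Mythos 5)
Your proposal is correct and follows essentially the same route as the paper: apply the diagonal extrapolation theorem (Theorem \ref{D-ext}) to the pairs $(|Tf|,|f|)$, $f\in C^\infty_c(\bR^n)$, to get the a priori bound, then extend by density using the sublinearity inequality. Your version of that inequality, $\bigl||Ta|-|Tb|\bigr|\le|T(a-b)|$, is in fact the more careful form of the paper's $|T(f)-T(g)|\le|T(f-g)|$, but the argument is the same.
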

\begin{proof}
	By Theorem \ref{D-ext}, for any $f\in C^\infty_c(\bR^n)$, we have
	$$\|Tf\|_{L^{\vec{q}}}\lesssim \|f\|_{L^{\vec{q}}}.$$
	Since $T$ is a sublinear operator, we have $|T(f)-T(g)|\leq |T(f-g)|$, and hence, for any $f,g\in C^\infty_c(\bR^n)$, we have
	$$\|T(f)-T(g)\|_{L^{\vec{q}}}\leq \|T(f-g)\|_{L^{\vec{q}}}\lesssim\|f-g\|_{L^{\vec{q}}}.$$
	Since $C^\infty_c(\bR^n)$ is dense in $L^{\vec{q}}(\bR^n)$, the above inequalities guarantee that $T$ can be extended
	to be a bounded operator on $L^{\vec{q}}(\bR^n)$.
\end{proof}
The following corollary is a consequence of Theorem \ref{subli} and the weighted boundedness of the corresponding operators.
\begin{coro}\label{C-1}
	Let $1<\vec{q}<\infty$, $b\in {\rm BMO}(\bR^n)$, then $M, K, [b,M], [b,K]$ are all bounded on $L^{\vec{q}}(\bR^n)$.
\end{coro}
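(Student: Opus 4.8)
The plan is to obtain all four assertions as instances of Theorem \ref{subli}, whose hypothesis is a single weighted $L^{q_0}$ estimate tested against arbitrary $A_1$ weights. The key preliminary observation is that $1<\vec{q}<\infty$ means $q_i>1$ for every $i$, so $\min_{1\le i\le n}q_i>1$ and I may fix an exponent $q_0$ with $1<q_0<\min_{1\le i\le n}q_i$. This choice serves two purposes simultaneously: it guarantees $q_0<\vec{q}$ componentwise, as required to invoke Theorem \ref{subli}, and since $q_0>1$ it yields the inclusion $A_1\subset A_{q_0}$ (the $A_p$ classes increase in $p$). Consequently any weighted inequality valid for all $w\in A_{q_0}$ automatically holds for all $w\in A_1$, which is exactly the form demanded by Theorem \ref{subli}.

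First I would record the classical weighted norm inequalities on $L^{q_0}(\bR^n)$ for this fixed $q_0$. By Muckenhoupt's theorem the maximal operator $M$ is bounded on $L^{q_0}(w)$ for every $w\in A_{q_0}$; by the Coifman--Fefferman inequality the Calder\'on--Zygmund operator $K$ is bounded on $L^{q_0}(w)$ for every $w\in A_{q_0}$; and for $b\in {\rm BMO}(\bR^n)$ the maximal commutator $[b,M]$ and the commutator $[b,K]$ are likewise bounded on $L^{q_0}(w)$ for every $w\in A_{q_0}$, the latter being the weighted form of the Coifman--Rochberg--Weiss theorem. Restricting each of these to $w\in A_1$ through the inclusion above gives, for every $S\in\{M,K,[b,M],[b,K]\}$, the estimate
$$\int_{\bR^n}|Sf(z)|^{q_0}w(z)\,dz\lesssim\int_{\bR^n}|f(z)|^{q_0}w(z)\,dz,\qquad f\in C^\infty_c(\bR^n),\ w\in A_1,$$
which is precisely the hypothesis of Theorem \ref{subli}.

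It then remains only to verify sublinearity. The operators $K$ and $[b,K]$ are linear, hence trivially sublinear; the maximal operator $M$ and the maximal commutator $[b,M]$ satisfy $|S(f+g)|\le|Sf|+|Sg|$ directly from the defining suprema together with the triangle inequality under the integral sign, so they are sublinear as well. Applying Theorem \ref{subli} separately to each of the four operators, with the common exponent $q_0$, then yields boundedness on $L^{\vec{q}}(\bR^n)$ and completes the proof.

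I do not anticipate a genuine obstacle: the whole argument is an assembly of standard weighted estimates feeding into Theorem \ref{subli}. The only point requiring real care is the choice of $q_0$, which must be taken strictly between $1$ and $\min_i q_i$ so that the $A_1$ hypothesis of Theorem \ref{subli} is compatible, via $A_1\subset A_{q_0}$, with the $A_{q_0}$ range in which the classical inequalities are stated. A secondary point worth flagging is the reading of $[b,M]$: to remain within this clean weighted theory and consistent with the size condition (\ref{T-3}), one should interpret it as the maximal commutator $\sup_{r>0}|B(x,r)|^{-1}\int_{B(x,r)}|b(x)-b(y)|\,|f(y)|\,dy$, whose $A_{q_0}$-weighted bound for $b\in {\rm BMO}$ is standard, rather than the nonlinear commutator $bMf-M(bf)$, whose boundedness carries additional sign restrictions on $b$.
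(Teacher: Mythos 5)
Your proposal is correct and follows essentially the same route as the paper: both invoke the classical $A_{q_0}$-weighted $L^{q_0}$ bounds for $M$, $K$, $[b,M]$, $[b,K]$, restrict via the inclusion $A_1\subset A_{q_0}$, and feed the result into Theorem \ref{subli}. The only cosmetic difference is that you fix a single $q_0$ with $1<q_0<\min_i q_i$ at the outset, whereas the paper lets $q_0$ range over $(1,\infty)$ and concludes by the arbitrariness of $q_0$; these are logically equivalent.
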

\begin{proof}
	It is well known that $M, K, [b,M], [b,K]$ are all sublinear operators, and bounded on $L^{q_0}_w(\bR^n)$ for arbitrary $1<q_0<\infty$ and $w\in A_{q_0}$ (see [\cite{grafakos2008classical}] for example). Since $A_1\subset A_{q_0}$, Theorem \ref{subli} implies that $M, K$, $[b,M], [b,K]$ are all bounded on $L^{\vec{q}}(\bR^n)$ for all $q_0<\vec{q}<\infty$.
	In view of the arbitrariness of $1<q_0<\infty$, $M$, $K, [b,M], [b,K]$ are also bounded on $L^{\vec{q}}(\bR^n)$ for all $1<\vec{q}<\infty$.
\end{proof}
Similarly, we have the following theorem.
\begin{theorem}\label{subilinear-2}
	Suppose that $0< q_0<p_0<\infty$, and 
	$T$ is a sublinear operator such that every $w\in A_1$,
	\begin{equation}\label{3-X-x}
	\l(\int_{\bR^n}|Tf(x)|^{p_0}w(x)dx\r)^{\frac{1}{p_0}}\lesssim \l(\int_{\bR^n}|f(x)|^{q_0}w(x)^{\frac{q_0}{p_0}}dx\r)^{\frac{1}{q_0}},~~f\in C^\infty_c(\bR^n).
	\end{equation}
	Then for all $q_0<\vec{q}<\frac{p_0q_0}{p_0-q_0}$, and $\vec{p}$ satisfies $1/\vec{q}-1/\vec{p}=1/q_0-1/p_0$,
	$T$ can be extended to a bounded operator from $L^{\vec{q}}(\bR^n)$ to $L^{\vec{p}}(\bR^n)$.
\end{theorem}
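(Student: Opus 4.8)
The plan is to transcribe the proof of Theorem \ref{subli} into the off-diagonal setting, using Theorem \ref{main-2} in place of the diagonal extrapolation Theorem \ref{D-ext}. First I would apply Theorem \ref{main-2} to the family of pairs
$$\mathfrak{F}=\{(|Tf|,|f|):f\in C^\infty_c(\bR^n)\}.$$
The weighted hypothesis (\ref{3-X-x}) of the present theorem is exactly the assumption (\ref{3-X}) of Theorem \ref{main-2} for this family, with the same $p_0,q_0$. Since by hypothesis $q_0<\vec{q}<\frac{p_0q_0}{p_0-q_0}$ and $\vec{p}$ satisfies $1/\vec{q}-1/\vec{p}=1/q_0-1/p_0$, all parameter constraints of Theorem \ref{main-2} are met, so its conclusion (\ref{5-1-X}) yields the a priori estimate
$$\|Tf\|_{L^{\vec{p}}}\lesssim\|f\|_{L^{\vec{q}}},\qquad f\in C^\infty_c(\bR^n).$$

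Next I would upgrade this estimate on smooth functions to a genuine bounded operator on all of $L^{\vec{q}}(\bR^n)$ by density, exactly as in Theorem \ref{subli}. Because $T$ is sublinear, $|T(f)-T(g)|\le|T(f-g)|$ pointwise, so for $f,g\in C^\infty_c(\bR^n)$ the displayed bound gives
$$\|T(f)-T(g)\|_{L^{\vec{p}}}\le\|T(f-g)\|_{L^{\vec{p}}}\lesssim\|f-g\|_{L^{\vec{q}}}.$$
Thus $T$ is Lipschitz from $\big(C^\infty_c(\bR^n),\|\cdot\|_{L^{\vec{q}}}\big)$ into $L^{\vec{p}}(\bR^n)$. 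Since $C^\infty_c(\bR^n)$ is dense in $L^{\vec{q}}(\bR^n)$ for $1<\vec{q}<\infty$ (see [\cite{1961The}]) and $L^{\vec{p}}(\bR^n)$ is complete, for $f\in L^{\vec{q}}(\bR^n)$ and any $f_k\in C^\infty_c(\bR^n)$ with $f_k\to f$ in $L^{\vec{q}}$ the sequence $(Tf_k)$ is Cauchy in $L^{\vec{p}}$ and hence converges; I would define the extension $Tf$ as this limit, checking in the usual way that it does not depend on the chosen approximating sequence. The estimate then passes to the limit, giving $\|Tf\|_{L^{\vec{p}}}\lesssim\|f\|_{L^{\vec{q}}}$ for every $f\in L^{\vec{q}}(\bR^n)$.

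I do not expect a serious obstacle: once Theorem \ref{main-2} is available the argument is the off-diagonal mirror of Theorem \ref{subli}. The only points needing care are verifying that the parameter ranges in the hypothesis match precisely those required by Theorem \ref{main-2}, so that (\ref{3-X-x}) genuinely triggers (\ref{5-1-X}), and confirming that the Cauchy-sequence extension is well defined. I would also note that the density statement is stated for $1<\vec{q}<\infty$, so the density step applies on the portion $\vec{q}>1$ of the interval $q_0<\vec{q}<\frac{p_0q_0}{p_0-q_0}$ (which is automatic in the applications, where $q_0\ge1$); this ensures the extrapolation step and the density step operate on a common range of $\vec{q}$.
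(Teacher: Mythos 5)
Your proposal is correct and matches the paper's intended argument: the paper itself states that Theorem \ref{subilinear-2} "follows similarly to Theorem \ref{subli}," i.e.\ by applying the off-diagonal extrapolation Theorem \ref{main-2} to the pairs $(|Tf|,|f|)$, $f\in C^\infty_c(\bR^n)$, and then extending by sublinearity and the density of $C^\infty_c(\bR^n)$ in $L^{\vec{q}}(\bR^n)$, exactly as you do. Your added remarks on the well-definedness of the Cauchy-sequence extension and on the range $\vec{q}>1$ for the density step are sensible refinements of the same proof, not a different route.
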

Since the proof of Theorem \ref{subilinear-2} is similar to that of Theorem \ref{subli}, we leave it to readers.

To apply Theorem \ref{subilinear-2} to fractional integral operator and its commutator, we need a different
class of weights: Suppose $0<\alpha<n$, $1<q<n/\alpha$ and $1/q-1/p=\alpha/n$, we say $w\in A_{q,p}(\bR^n)$ if for any ball $B\in\bB$,
\begin{eqnarray*}
	\frac{1}{|B|}\int_{B}w(x)dx\l(\frac{1}{|B|}\int_{B}w(x)^{-q'/p}dx \r)^{p/q'}<\infty.
\end{eqnarray*}
Note that this is equivalent to $w\in A_r$, where $r=1+p/q'$, so, in particular if $w\in A_1$, then $w\in A_{q,p}$. The boundedness of fractional integral operator $I_\alpha$ and its commutator $[b,I_\alpha]$ on weighted Lebesgue spaces was obtained in [\cite{lu2007singular,muckenhoupt1974norm}].
\begin{lemma}\label{wang}
	Let $0<\alpha<n$ and $1<q<p<\infty$ with $1/q-1/p=\alpha/n$. If
	$w\in A_{q,p}$ and $b\in {\rm BMO}(\bR^n)$, then we have
	\begin{eqnarray*}
		\l(\int_{\bR^n}|I_{\alpha}f(x)|^pw(x)dx\r)^{1/p}
		\lesssim\l(\int_{\bR^n}|f(x)|^qw(x)^{q/p}dx\r)^{1/q},
	\end{eqnarray*}
	\begin{eqnarray*}
		\l(\int_{\bR^n}|[b,I_{\alpha}]f(x)|^pw(x)dx\r)^{1/p}
		\lesssim\l(\int_{\bR^n}|f(x)|^qw(x)^{q/p}dx\r)^{1/q}.
	\end{eqnarray*}
\end{lemma}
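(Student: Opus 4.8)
The plan is to prove the two inequalities separately, handling the fractional integral first since the commutator estimate builds on it. Throughout I use the observation already recorded in the text that $w\in A_{q,p}$ is equivalent to $w\in A_r$ with $r=1+p/q'$, so in particular $w\in A_\infty$ and all the $A_\infty$ machinery (good-$\lambda$ inequalities and the weighted Fefferman--Stein inequality) is at my disposal. For the fractional integral inequality I would first dispose of the fractional maximal operator $M_\alpha$, for which the bound $\|M_\alpha f\|_{L^p(w)}\lesssim\|f\|_{L^q(w^{q/p})}$ is elementary: on each ball $B$ one writes $\frac{1}{|B|^{1-\alpha/n}}\int_B|f|$ and applies Hölder with exponents $q,q'$ against the factors $w^{1/p}$ and $w^{-1/p}$, after which the $A_{q,p}$ constant converts the average of $w^{-q'/p}$ into the correct power of $|B|$; a standard covering argument upgrades this pointwise estimate to the strong $(q,p)$ bound. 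I would then transfer the estimate from $M_\alpha$ to $I_\alpha$ via the Muckenhoupt--Wheeden good-$\lambda$ inequality, namely that for $w\in A_\infty$ and suitable parameters
$$w(\{I_\alpha f>2\lambda,\ M_\alpha f\le\gamma\lambda\})\lesssim\gamma^{\delta}\,w(\{I_\alpha f>\lambda\}),$$
which after integration in $\lambda$ yields $\|I_\alpha f\|_{L^p(w)}\lesssim\|M_\alpha f\|_{L^p(w)}$ and hence the first inequality. (Alternatively one may invoke the sparse domination $I_\alpha f\lesssim\sum_{Q\in\mathcal S}|Q|^{\alpha/n}\langle|f|\rangle_Q\chi_Q$ and test the sparse form directly against the $A_{q,p}$ weight.)

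For the commutator $[b,I_\alpha]$ the natural route is the sharp maximal function method. I would establish a pointwise estimate of the form
$$M^{\sharp}_\delta\big([b,I_\alpha]f\big)(x)\lesssim\|b\|_{{\rm BMO}}\Big(M_{\varepsilon}(I_\alpha f)(x)+M_{\alpha,s}f(x)\Big)$$
for small $0<\delta<\varepsilon<1$ and some $s>1$ close to $1$, where $M_\eta g=(M(|g|^\eta))^{1/\eta}$ and $M_{\alpha,s}$ is an $L^s$-averaged fractional maximal operator; its proof expands the commutator around the average $b_Q$ on each cube, exploits that $b-b_Q$ has controlled mean oscillation, and applies the John--Nirenberg inequality. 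Applying the weighted Fefferman--Stein inequality $\|g\|_{L^p(w)}\lesssim\|M^{\sharp}_\delta g\|_{L^p(w)}$, valid for $w\in A_\infty$, reduces matters to the $L^p(w)$ norms of $M_{\varepsilon}(I_\alpha f)$ and $M_{\alpha,s}f$. The first is controlled by the weighted boundedness of $M$ combined with the first inequality already proved for $I_\alpha$, and the second by the $A_{q,p}$ bound for the fractional maximal operator, choosing $s$ close enough to $1$ that $w$ still lies in the relevant weight class; assembling these yields the commutator estimate.

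The main obstacle is precisely the sharp maximal function estimate for the commutator: one must control the extra oscillatory factor $b(x)-b(y)$ uniformly across all scales, which is what forces the appearance of the $L\log L$-type (equivalently, $L^s$-averaged) fractional maximal operator in place of the plain $M_\alpha$, and which demands careful bookkeeping of the exponents $\delta,\varepsilon,s$ so that every maximal operator appearing on the right remains bounded in the weighted norm. Once this pointwise inequality is secured, the remainder is a routine assembly of the fractional and ordinary weighted maximal bounds together with the Fefferman--Stein inequality.
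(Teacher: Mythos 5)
Your proposal cannot be compared against a proof in the paper, because the paper does not prove this lemma at all: Lemma \ref{wang} is imported as a known result from the cited references (Lu--Ding--Yan and Muckenhoupt--Wheeden), the paper's only original contribution being the remark that its class $A_{q,p}$ is the standard one with $w$ in place of $w^p$. What you have written is, in effect, a reconstruction of the proofs in those references, and it is the right reconstruction: the first inequality via the $A_{q,p}$ bound for $M_\alpha$ plus the Muckenhoupt--Wheeden good-$\lambda$ comparison $\|I_\alpha f\|_{L^p(w)}\lesssim\|M_\alpha f\|_{L^p(w)}$ for $w\in A_\infty$, and the commutator via the P\'erez/Cruz-Uribe sharp-maximal pointwise estimate $M^{\sharp}_\delta([b,I_\alpha]f)\lesssim\|b\|_{{\rm BMO}}(M_\varepsilon(I_\alpha f)+M_{\alpha,s}f)$ followed by weighted Fefferman--Stein. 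Your bookkeeping of weight classes is also sound: $w\in A_r$ with $r=1+p/q'<p$, so $M_\varepsilon$ is bounded on $L^p(w)$ for any $\varepsilon<1$, and the openness of the $A_r$ condition lets you take $s>1$ close enough to $1$ for the $M_{\alpha,s}$ term.

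Two steps are stated more casually than they deserve. First, H\"older plus a Vitali covering argument yields only the \emph{weak} $(q,p)$ inequality for $M_\alpha$ (the covering produces $w(\{M_\alpha f>t\})\lesssim t^{-p}\|f\|_{L^q(w^{q/p})}^p$); the strong bound you need is then obtained by Marcinkiewicz interpolation between two such weak estimates, which in turn uses the self-improvement (openness) of the $A_{q,p}$ condition --- it is not a direct consequence of the covering argument as your phrasing suggests. Second, the Fefferman--Stein step $\|g\|_{L^p(w)}\lesssim\|M^{\sharp}_\delta g\|_{L^p(w)}$ requires an a priori finiteness qualification (e.g.\ first taking $f$ bounded with compact support so that $M_\delta([b,I_\alpha]f)\in L^p(w)$, then passing to the limit by density). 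Both points are standard and fillable, so I regard the proposal as correct in substance, and as following exactly the route of the literature on which the paper relies.
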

These results are usually stated with the class $A_{q,p}$ defined slightly differently, with
$w$ replaced by $w^p$ (see [\cite{cruz2006boundedness,lu2007singular,muckenhoupt1974norm}] for example). Our formulation, though non-standard, is better for our purposes.

By Theorem \ref{subilinear-2} and Lemma \ref{wang}, we can get the following result.
\begin{coro}\label{C-2}
	Let $0<\alpha<n$, $1<\vec{q}<n/\alpha$, $\frac{1}{\vec{q}}-\frac{1}{\vec{p}}=\frac{\alpha}{n}$, and $b\in {\rm BMO}(\bR^n)$, then both $I_\alpha$ and $[b,I_\alpha]$ are bounded from $L^{\vec{q}}(\bR^n)$ to $L^{\vec{p}}(\bR^n)$.
\end{coro}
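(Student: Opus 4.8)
The plan is to derive the corollary as an immediate application of the off-diagonal extrapolation theorem (Theorem \ref{subilinear-2}), fed by the single-weight estimates of Lemma \ref{wang}. Both $I_\alpha$ and $[b,I_\alpha]$ are linear, hence in particular sublinear, so the entire argument reduces to (i) selecting scalar exponents $q_0,p_0$ for which Lemma \ref{wang} supplies hypothesis (\ref{3-X-x}), and (ii) verifying that the prescribed $\vec{q},\vec{p}$ lie in the range where Theorem \ref{subilinear-2} applies.

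First I would fix $q_0$ and $p_0$. Since each component of $\vec{q}$ exceeds $1$, one may choose $q_0$ with $1<q_0<\min_{1\le i\le n}q_i$, so that $q_0<\vec{q}$ componentwise, and then set $\tfrac{1}{p_0}=\tfrac{1}{q_0}-\tfrac{\alpha}{n}$. As $q_0<\vec{q}<n/\alpha$, the right-hand side is positive and strictly smaller than $\tfrac{1}{q_0}$, which yields $1<q_0<p_0<\infty$ together with $\tfrac{1}{q_0}-\tfrac{1}{p_0}=\tfrac{\alpha}{n}$ --- exactly the configuration required by Lemma \ref{wang}.

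Next I would verify hypothesis (\ref{3-X-x}). For any $w\in A_1$, the remark following Lemma \ref{wang} gives $w\in A_{q_0,p_0}$, whence Lemma \ref{wang} provides the weighted bounds for $I_\alpha$ and for $[b,I_\alpha]$ with exponents $(q_0,p_0)$; these are precisely the inequalities (\ref{3-X-x}). Theorem \ref{subilinear-2} then extends each operator to a bounded map from $L^{\vec{r}}(\bR^n)$ to $L^{\vec{s}}(\bR^n)$ for every $q_0<\vec{r}<\frac{p_0q_0}{p_0-q_0}$ with $\tfrac{1}{\vec{r}}-\tfrac{1}{\vec{s}}=\tfrac{\alpha}{n}$.

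Finally, a one-line computation turns $\tfrac{1}{q_0}-\tfrac{1}{p_0}=\tfrac{\alpha}{n}$ into $\frac{p_0q_0}{p_0-q_0}=\frac{n}{\alpha}$, so the admissible range collapses to $q_0<\vec{r}<n/\alpha$. Since $q_0$ was chosen below $\vec{q}$ and the hypotheses give $\vec{q}<n/\alpha$ with $\tfrac{1}{\vec{q}}-\tfrac{1}{\vec{p}}=\tfrac{\alpha}{n}$, the pair $(\vec{q},\vec{p})$ sits inside this range and the desired boundedness follows for both operators. The only genuine care is the exponent bookkeeping in this last step: one must check that the upper endpoint $\frac{p_0q_0}{p_0-q_0}$ equals $n/\alpha$, since it is this identity that makes the hypothesis $\vec{q}<n/\alpha$ the exact condition placing $\vec{q}$ in the admissible interval. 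This is the main, albeit mild, obstacle.
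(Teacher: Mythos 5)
Your proposal is correct and is essentially the paper's own argument: the paper proves Corollary \ref{C-2} by "repeating" the proof of Corollary \ref{C-1}, i.e.\ feeding the weighted estimates of Lemma \ref{wang} (with $A_1\subset A_{q_0,p_0}$) into the off-diagonal extrapolation Theorem \ref{subilinear-2}, exactly as you do. Your explicit choice of $q_0<\min_i q_i$ and the computation $\frac{p_0q_0}{p_0-q_0}=\frac{n}{\alpha}$ simply spell out the "arbitrariness of $q_0$" step that the paper leaves implicit.
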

\begin{proof}
	The proof is just a repetition of the proof of Corollary \ref{C-1}, so we omit the details.
\end{proof}
We point out that the boundedness of $I_\alpha$ on mixed Lebesgue spaces has already proved in [\cite{1961The}] in a more general setting. However, our proof, relying on the extrapolation theory on mixed norm spaces, has its own interest.

From the statement at the beginning of this section, we can obtain the boundedness of $M, K$ on generalized mixed Morrey spaces, whose proof is just a combination of Theorem \ref{T-T1}
and Corollary \ref{C-1}.
\begin{theorem}\label{T-T5}
	Let $1<\vec{q}<\infty$, and $(\varphi_1,\varphi_2)$ satisfy the condition
	$$\int_r^\infty\frac{{\mathrm{ess}\inf}_{t<s<\infty}\varphi_1(x,s)s^{\sum_{i=1}^n\frac{1}{q_i}}}{t^{1+\sum_{i=1}^n\frac{1}{q_i}}}dt\lesssim \varphi_2(x,r).$$
	Then the Hardy-Littlewood maximal operator $M$ and the Calder{\'o}n-Zygmund singular integral operator $K$ are both bounded from $M^{\varphi_1}_{\vec{q}}(\bR^n)$ to $M^{\varphi_2}_{\vec{q}}(\bR^n)$. Moreover,
	$$\|Mf\|_{M^{\varphi_2}_{\vec{q}}}\lesssim \|f\|_{M^{\varphi_1}_{\vec{q}}},$$
	$$\|Kf\|_{M^{\varphi_2}_{\vec{q}}}\lesssim \|f\|_{M^{\varphi_1}_{\vec{q}}}.$$
\end{theorem}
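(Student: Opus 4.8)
The plan is to realize Theorem \ref{T-T5} as a direct application of Theorem \ref{T-T1} to the two concrete operators $M$ and $K$. Theorem \ref{T-T1} requires exactly two inputs about an operator: that it be sublinear and satisfy the pointwise size condition (\ref{T-1}), and that it be bounded on $L^{\vec{q}}(\bR^n)$ for the given exponent $\vec{q}$. Since the hypothesis imposed on $(\varphi_1,\varphi_2)$ here is verbatim the one in Theorem \ref{T-T1}, once these two inputs are verified for $M$ and for $K$ the conclusion together with the quantitative estimate follows immediately.

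First I would verify the size condition (\ref{T-1}) for each operator. For the Calder{\'o}n-Zygmund singular integral operator this is immediate: $K$ is linear, hence sublinear, and for $f\in L^1(\bR^n)$ with compact support and $x\notin\mathrm{supp}\,f$ the integral representation together with the kernel bound $|k(x,y)|\lesssim|x-y|^{-n}$ gives $|Kf(x)|\le\int|k(x,y)|\,|f(y)|\,dy\lesssim\int|f(y)|\,|x-y|^{-n}\,dy$. For the Hardy--Littlewood maximal operator $M$ one uses that $M$ is sublinear and that, for $y\in B(x,r)$, one has $|x-y|<r$, so that $|B(x,r)|^{-1}\lesssim|x-y|^{-n}$ on the ball; averaging and then taking the supremum over $r>0$ yields $Mf(x)\lesssim\int|f(y)|\,|x-y|^{-n}\,dy$ for $x$ outside $\mathrm{supp}\,f$. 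Both verifications are standard and were already recorded in the discussion opening this section.

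Next I would supply the $L^{\vec{q}}(\bR^n)$ boundedness, which is precisely the content of Corollary \ref{C-1}: for every $1<\vec{q}<\infty$, both $M$ and $K$ map $L^{\vec{q}}(\bR^n)$ boundedly to itself. With the size condition and the $L^{\vec{q}}$ boundedness in hand, I would invoke Theorem \ref{T-T1} twice, once with $T=M$ and once with $T=K$, using the same pair $(\varphi_1,\varphi_2)$, to obtain $\|Mf\|_{M^{\varphi_2}_{\vec{q}}}\lesssim\|f\|_{M^{\varphi_1}_{\vec{q}}}$ and $\|Kf\|_{M^{\varphi_2}_{\vec{q}}}\lesssim\|f\|_{M^{\varphi_1}_{\vec{q}}}$.

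Since every hypothesis of Theorem \ref{T-T1} is either assumed here or already established, there is no genuine analytic obstacle; the proof is essentially a bookkeeping combination of Theorem \ref{T-T1} and Corollary \ref{C-1}. The only point that deserves care is confirming that $M$ and $K$ truly satisfy the strong pointwise bound (\ref{T-1}) rather than merely a weak-type analogue, but this is exactly the elementary pointwise comparison recalled above.
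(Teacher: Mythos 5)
Your proposal is correct and is essentially identical to the paper's own proof: the paper states that Theorem \ref{T-T5} follows ``just [as] a combination of Theorem \ref{T-T1} and Corollary \ref{C-1},'' which is exactly your argument. Your additional explicit verification that $M$ and $K$ satisfy the size condition (\ref{T-1}) is the standard check the paper defers to its introduction and the opening of Section 5, so nothing differs in substance.
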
 
Similarly, we have
\begin{theorem}\label{T-T6}
	Let $1<\vec{q}<\infty$, and $(\varphi_1,\varphi_2)$ satisfy the condition
	$$\int_r^\infty\l(1+\ln \frac{t}{r}\r)\frac{{\mathrm{ess}\inf}_{t<s<\infty}\varphi_1(x,s)s^{\sum_{i=1}^n\frac{1}{q_i}}}{t^{1+\sum_{i=1}^n\frac{1}{q_i}}}dt\lesssim \varphi_2(x,r).$$ 
	If $b\in {\rm BMO}(\bR^n)$, then the commutator of the Hardy-Littlewood maximal operator $[b,M]$ and the commutator of the Calder{\'o}n-Zygmund singular integral operator $[b,K]$ are both bounded from $M^{\varphi_1}_{\vec{q}}(\bR^n)$ to $M^{\varphi_2}_{\vec{q}}(\bR^n)$. Moreover,
	$$\|[b,M]f\|_{M^{\varphi_2}_{\vec{q}}}\lesssim \|f\|_{M^{\varphi_1}_{\vec{q}}},$$
	$$\|[b,K]f\|_{M^{\varphi_2}_{\vec{q}}}\lesssim \|f\|_{M^{\varphi_1}_{\vec{q}}}.$$
\end{theorem}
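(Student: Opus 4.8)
The plan is to obtain this statement as a direct specialization of the abstract Theorem \ref{T-T3}. The hypothesis imposed on the pair $(\varphi_1,\varphi_2)$ here is word for word the one required in Theorem \ref{T-T3}, so no new condition on the weights has to be checked. Consequently it suffices to verify, for each of $[b,M]$ and $[b,K]$, the two structural assumptions of Theorem \ref{T-T3}: that the operator is sublinear and obeys the size estimate (\ref{T-3}), and that it is bounded on $L^{\vec q}(\bR^n)$ for every $1<\vec q<\infty$.

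The $L^{\vec q}$-boundedness is already in hand: it is precisely the content of Corollary \ref{C-1}, which was derived from the weighted $L^{q_0}_w$ bounds for $[b,M]$ and $[b,K]$ together with the diagonal extrapolation Theorem \ref{subli}. Thus the only genuine work is the verification of (\ref{T-3}). For $[b,K]$ this is immediate: writing, for $x\notin\mathrm{supp}\,f$,
$$[b,K]f(x)=\int_{\bR^n}\l(b(x)-b(y)\r)k(x,y)f(y)\,dy$$
and inserting the standard size bound $|k(x,y)|\lesssim|x-y|^{-n}$ yields exactly
$$|[b,K]f(x)|\lesssim\int_{\bR^n}\frac{|b(x)-b(y)|}{|x-y|^n}|f(y)|\,dy,$$
which is (\ref{T-3}).

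For the maximal commutator $[b,M]$ the verification requires one extra observation because of the supremum in its definition. For a fixed $r>0$ and any $y\in B(x,r)$ one has $|x-y|<r$, hence $|B(x,r)|^{-1}\lesssim|x-y|^{-n}$; therefore
$$\frac{1}{|B(x,r)|}\int_{B(x,r)}|b(x)-b(y)|\,|f(y)|\,dy\lesssim\int_{\bR^n}\frac{|b(x)-b(y)|}{|x-y|^n}|f(y)|\,dy,$$
and taking the supremum over $r>0$ gives (\ref{T-3}) for $[b,M]$; sublinearity of both operators is standard. With (\ref{T-3}) and the $L^{\vec q}$-boundedness established, Theorem \ref{T-T3} applies verbatim and yields the two claimed estimates, with the implicit constants absorbing the factor $\|b\|_{{\rm BMO}}$. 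The only mildly delicate point in the whole argument is the passage just displayed for $[b,M]$, where the pointwise bound $|B(x,r)|^{-1}\lesssim|x-y|^{-n}$ must hold uniformly in $r$ before the supremum is taken; everything else reduces to quoting Corollary \ref{C-1} and Theorem \ref{T-T3}.
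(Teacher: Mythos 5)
Your proposal is correct and follows exactly the paper's (implicit) argument: the paper obtains Theorem \ref{T-T6} by combining Theorem \ref{T-T3} with Corollary \ref{C-1}, having asserted at the start of Section 5 that $[b,M]$ and $[b,K]$ are sublinear and satisfy (\ref{T-3}). Your only addition is to spell out the verification of (\ref{T-3}) --- the kernel bound for $[b,K]$ and the uniform estimate $|B(x,r)|^{-1}\lesssim|x-y|^{-n}$ for the maximal commutator --- which the paper leaves to the reader, and both of those verifications are sound.
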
 
For the boundedness of $I_{\alpha}, [b,I_{\alpha}]$ on generalized mixed Morrey spaces, we have the following theorems.
\begin{theorem}\label{T-T7}
	Let $1<\vec{q}<\infty$, $0<\alpha<n/\vec{q}$, $\frac{1}{\vec{p}}=\frac{1}{\vec{q}}-\frac{\alpha}{n}$, and $(\varphi_1,\varphi_2)$ satisfy the condition
	$$\int_r^\infty\frac{{\mathrm{ess}\inf}_{t<s<\infty}\varphi_1(x,s)s^{\sum_{i=1}^n\frac{1}{q_i}}}{t^{1+\sum_{i=1}^n\frac{1}{p_i}}}dt\lesssim \varphi_2(x,r).$$
	Then the fractional integral operator $I_\alpha$ is bounded from $M^{\varphi_1}_{\vec{q}}(\bR^n)$ to $M^{\varphi_2}_{\vec{p}}(\bR^n)$. Moreover,
	$$\|I_\alpha f\|_{M^{\varphi_2}_{\vec{p}}}\lesssim \|f\|_{M^{\varphi_1}_{\vec{q}}}.$$ 
\end{theorem}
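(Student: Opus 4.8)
The plan is to recognize $I_\alpha$ as a concrete instance of the abstract sublinear operator $T_\alpha$ already handled in Theorem \ref{T-T2}, and then to furnish the required $L^{\vec{q}}$-to-$L^{\vec{p}}$ boundedness from the extrapolation results of Section 5. In other words, this theorem should follow by exactly the same mechanism by which Theorem \ref{T-T5} combines Theorem \ref{T-T1} with Corollary \ref{C-1}.

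First I would verify that $I_\alpha$ satisfies the size condition (\ref{T-2}). This is immediate from the definition, since for any $f$ and any $x$,
$$|I_\alpha f(x)|=\left|\int_{\bR^n}\frac{f(y)}{|x-y|^{n-\alpha}}dy\right|\leq\int_{\bR^n}\frac{|f(y)|}{|x-y|^{n-\alpha}}dy,$$
so (\ref{T-2}) holds (indeed with the same kernel). Moreover $I_\alpha$ is linear, hence in particular sublinear, so it is an admissible operator for the framework of Section 3.

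Second, I would invoke Corollary \ref{C-2}, which under the hypotheses $0<\alpha<n$, $1<\vec{q}<n/\alpha$ (equivalently $0<\alpha<n/\vec{q}$) and $\frac{1}{\vec{q}}-\frac{1}{\vec{p}}=\frac{\alpha}{n}$ guarantees that $I_\alpha$ is bounded from $L^{\vec{q}}(\bR^n)$ to $L^{\vec{p}}(\bR^n)$. These exponent constraints are precisely those assumed in the present statement, so no compatibility issue arises. With both the size condition (\ref{T-2}) and this strong-type bound in hand, and since the hypothesis imposed on the pair $(\varphi_1,\varphi_2)$ here is identical to the one required in Theorem \ref{T-T2}, I would then apply Theorem \ref{T-T2} directly with $T_\alpha=I_\alpha$ to obtain
$$\|I_\alpha f\|_{M^{\varphi_2}_{\vec{p}}}\lesssim\|f\|_{M^{\varphi_1}_{\vec{q}}},$$
which is the desired conclusion.

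There is essentially no genuine obstacle; the whole content is the observation that $I_\alpha$ fits the abstract template. The only point deserving a moment's care is checking that the exponent relations in Corollary \ref{C-2} and in Theorem \ref{T-T2} coincide — they do, both amounting to $\frac{1}{\vec{p}}=\frac{1}{\vec{q}}-\frac{\alpha}{n}$ together with $0<\alpha<n/\vec{q}$ — so that the two results chain together without any gap.
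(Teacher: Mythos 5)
Your proposal is correct and matches the paper's own (implicit) argument exactly: the paper obtains Theorem \ref{T-T7} precisely by noting that $I_\alpha$ is a linear operator satisfying the size condition (\ref{T-2}), supplying the $L^{\vec{q}}(\bR^n)\to L^{\vec{p}}(\bR^n)$ boundedness via Corollary \ref{C-2}, and then applying Theorem \ref{T-T2}. Your check that the exponent conditions ($0<\alpha<n/\vec{q}$, i.e.\ $\vec{q}<n/\alpha$, and $\frac{1}{\vec{p}}=\frac{1}{\vec{q}}-\frac{\alpha}{n}$) in Corollary \ref{C-2} and Theorem \ref{T-T2} coincide is the only point needing care, and you handled it correctly.
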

\begin{theorem}\label{T-T8}
	Let $1<\vec{q}<\infty$, $0<\alpha<n/\vec{q}$, $\frac{1}{\vec{p}}=\frac{1}{\vec{q}}-\frac{\alpha}{n}$, and $(\varphi_1,\varphi_2)$ satisfy the condition
	$$\int_r^\infty\l(1+\ln \frac{t}{r}\r)\frac{{\mathrm{ess}\inf}_{t<s<\infty}\varphi_1(x,s)s^{\sum_{i=1}^n\frac{1}{q_i}}}{t^{1+\sum_{i=1}^n\frac{1}{p_i}}}dt\lesssim \varphi_2(x,r).$$
	If $b\in {\rm BMO}(\bR^n)$, then the commutator of the fractional integral operator $[b,I_\alpha]$ is bounded from $M^{\varphi_1}_{\vec{q}}(\bR^n)$ to $M^{\varphi_2}_{\vec{p}}(\bR^n)$. Moreover,
	$$\|[b,I_\alpha]f\|_{M^{\varphi_2}_{\vec{p}}}\lesssim \|f\|_{M^{\varphi_1}_{\vec{q}}}.$$
\end{theorem}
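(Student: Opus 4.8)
The plan is to recognize $[b,I_\alpha]$ as a concrete instance of the abstract operator $T_{b,\alpha}$ studied in Section 4 and then to invoke Theorem \ref{T-T4} directly. To this end I would proceed in three short steps, the analytic substance having already been absorbed into the earlier results.

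First I would verify that $[b,I_\alpha]$ satisfies the size condition (\ref{T-4}). Writing out the commutator, for $f\in L^1(\bR^n)$ with compact support and $x\notin\mathrm{supp}f$ one has
$$[b,I_\alpha]f(x)=b(x)I_\alpha f(x)-I_\alpha(bf)(x)=\int_{\bR^n}\frac{b(x)-b(y)}{|x-y|^{n-\alpha}}f(y)\,dy,$$
whence $|[b,I_\alpha]f(x)|\leq\int_{\bR^n}\frac{|b(x)-b(y)|}{|x-y|^{n-\alpha}}|f(y)|\,dy$, which is exactly (\ref{T-4}). In particular $[b,I_\alpha]$ is linear, hence sublinear, so it qualifies as an admissible $T_{b,\alpha}$.

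Second I would supply the $L^{\vec{q}}\to L^{\vec{p}}$ boundedness required as a hypothesis of Theorem \ref{T-T4}. This is precisely the content of Corollary \ref{C-2}: under $0<\alpha<n/\vec{q}$ (equivalently $1<\vec{q}<n/\alpha$) and $\frac{1}{\vec{p}}=\frac{1}{\vec{q}}-\frac{\alpha}{n}$, both $I_\alpha$ and $[b,I_\alpha]$ map $L^{\vec{q}}(\bR^n)$ boundedly into $L^{\vec{p}}(\bR^n)$. Here I would merely check that the parameter ranges stipulated in Theorem \ref{T-T8} coincide with those of Corollary \ref{C-2}, which they do. Third, with these facts established the hypotheses of Theorem \ref{T-T4} hold verbatim, since $b\in{\rm BMO}(\bR^n)$, the pair $(\varphi_1,\varphi_2)$ obeys the same integral condition, and $T_{b,\alpha}=[b,I_\alpha]$ satisfies both (\ref{T-4}) and the mixed-norm mapping property. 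Applying Theorem \ref{T-T4} then gives $\|[b,I_\alpha]f\|_{M^{\varphi_2}_{\vec{p}}}\lesssim\|b\|_{{\rm BMO}}\|f\|_{M^{\varphi_1}_{\vec{q}}}\lesssim\|f\|_{M^{\varphi_1}_{\vec{q}}}$, as claimed.

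The main obstacle does not reside in Theorem \ref{T-T8} at all, but upstream in the weighted estimate underpinning Corollary \ref{C-2}: it is the off-diagonal extrapolation of Theorem \ref{main-2} and Theorem \ref{subilinear-2}, combined with the weighted bound of Lemma \ref{wang}, that genuinely produces the mixed-norm inequality $L^{\vec{q}}\to L^{\vec{p}}$ for $[b,I_\alpha]$. Once that input is granted, Theorem \ref{T-T8} is a direct specialization of the abstract commutator theorem and requires no further analytic work.
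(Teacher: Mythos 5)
Your proposal is correct and coincides with the paper's own (implicitly stated) argument: the paper derives Theorem \ref{T-T8} exactly by observing that $[b,I_\alpha]$ is a linear operator satisfying (\ref{T-4}), taking the $L^{\vec{q}}(\bR^n)\to L^{\vec{p}}(\bR^n)$ boundedness from Corollary \ref{C-2} (itself resting on the off-diagonal extrapolation of Theorems \ref{main-2} and \ref{subilinear-2} together with Lemma \ref{wang}), and then specializing Theorem \ref{T-T4}. You also correctly checked the equivalence of the parameter ranges $0<\alpha<n/\vec{q}$ and $1<\vec{q}<n/\alpha$, so nothing is missing.
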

We pointed out that the results in Theorem \ref{T-T7} and Theorem \ref{T-T8} remain true for fractional maximal operator and its commutator, sicnce $M_\alpha f\lesssim I_\alpha |f|$ and $[b,M_\alpha]f\lesssim [b,I_\alpha] |f|$ for all $f\in L^1_{{\rm loc}}(\bR^n)$.

\section*{Data availability statement}
Data sharing not applicable to this article as no datasets were generated or analysed during the current study.

\section*{Acknowledgement(s)}

Right after we completed the first version of the current paper we discovered that generalized mixed Morrey spaces were introduced independently by Zhang and Zhou in [\cite{zhang2021boundedness}] in different ways. The author would like to thank  the anonymous referee for many valuable comments and suggestions. This work was supported by the Natural Science Foundation of Henan Province (Grant Nos. 202300410338) and the Nanhu Scholar Program for Young Scholars of Xinyang Normal University.

% ------------------------------------------------------------------------
\end{document}